\numberwithin{equation}{section}
\newtheorem{theorem}{Theorem}[section]
\newtheorem{lem}[theorem]{Lemma}
\newtheorem{fact}[theorem]{Fact}
\newtheorem{claim}[theorem]{Claim}
\newtheorem{obs}[theorem]{Observation}
\newtheorem{prop}[theorem]{Proposition}
\theoremstyle{definition}
\newtheorem{defn}{Definition}[section]
\theoremstyle{remark}
\newcommand{\Exp}{\operatornamewithlimits{\mathbb{E}}}
\newcommand{\cube}{\operatorname{\{0, 1\}}}
\newcommand{\eps}{\epsilon}
\newcommand{\polyn}{\operatorname{poly} }
\newcommand{\calf}{{\cal F}}
\newcommand{\calm}{{\cal M}}
\newcommand{\const}{\mathbf{Const}}
\newcommand{\mlin}{\mathbf{Lin}}
\newcommand{\aff}{{\mathbf{Aff}}}
\newcommand{\blin}{\overline{\mathbf{Lin}}}
\newcommand{\baff}{\overline{\mathbf{Aff}}}
\newcommand{\flin}{{\calf}_{\mathbf{lin}}}
\newcommand{\faff}{{\calf}_{\mathbf{aff}}}
\newcommand{\bflin}{\overline{{\calf}_{\mathbf{lin}}}}
\newcommand{\bfaff}{\overline{{\calf}_{\mathbf{aff}}}}
\begin{document}
\title{Testing Linear-Invariant Non-Linear Properties}
\author{Arnab Bhattacharyya%
\thanks{MIT CSAIL. \texttt{abhatt@csail.mit.edu}. Research supported in part
by a DOE Computational Science Graduate Fellowship.%
} \and Victor Chen%
\thanks{MIT CSAIL. \texttt{victor@csail.mit.edu}. Research supported in part
by NSF Award CCR-0514915.%
} \and Madhu Sudan%
\thanks{MIT CSAIL. \texttt{madhu@csail.mit.edu}. Research supported in part
by NSF Award CCR-0514915.%
} \and Ning Xie%
\thanks{MIT CSAIL. \texttt{ningxie@csail.mit.edu}. Research supported in part
by an Akamai Presidential Fellowship and NSF grant 0514771.%
} }

\date{}

\maketitle
\setcounter{page}{0}
\begin{abstract}
We consider the task of testing properties of Boolean functions that
are invariant under linear transformations of the Boolean cube. Previous
work in property testing, including the linearity test and the test
for Reed-Muller codes, has mostly focused on such tasks for linear
properties. The one exception is a test due to Green for {}``triangle
freeness'': a function $f:\cube^{n}\to\cube$ satisfies this property
if $f(x),f(y),f(x+y)$ do not all equal $1$, for any pair $x,y\in\cube^{n}$.

Here we extend this test to a more systematic study of testing for
linear-invariant non-linear properties. We consider properties that
are described by a single forbidden pattern (and its linear transformations),
i.e., a property is given by $k$ points $v_{1},\ldots,v_{k}\in\cube^{k}$
and $f:\cube^{n}\to\cube$ satisfies the property that if for all
linear maps $L:\cube^{k}\to\cube^{n}$ it is the case that $f(L(v_{1})),\ldots,f(L(v_{k}))$
do not all equal $1$. We show that this property is testable if the
underlying matroid specified by $v_{1},\ldots,v_{k}$ is a graphic
matroid. This extends Green's result to an infinite class of new properties.

Our techniques extend those of Green and in particular we establish
a link between the notion of ``$1$-complexity linear systems''
of Green and Tao, and graphic matroids, to derive the results. 
\end{abstract}
\newpage{}

\pagenumbering{arabic}

\section{Introduction\label{sec:Introduction}}

Property testing considers the task of testing, {}``super-efficiently'',
if a function $f:D\to R$ mapping a finite domain $D$ to a finite
range $R$ essentially satisfies some desirable property. Letting
$\{D\to R\}$ denote the set of all functions from $D$ to $R$, a
{\em property} is formally specified by a family $\calf\subseteq\{D\to R\}$
of functions. A {\em tester} has oracle access to the function
$f$ and should accept with high probability if $f\in\calf$ and reject
(also with high probability) functions that are {\em far} from
$\calf$, while making very few queries to the oracle for $f$. Here,
distance between functions $f,g:D\to R$, denoted $\delta(f,g)$,
is simply the probability that $f(x)\ne g(x)$ when $x$ is chosen
uniformly at random from $D$ and $\delta(f,\calf)=\min_{g\in\calf}\{\delta(f,g)\}$.
We say $f$ is $\delta$-far from $\calf$ if $\delta(f,\calf)\geq\delta$
and $\delta$-close otherwise. The central parameter associated with
a tester is the number of oracle queries it makes to the function
$f$ being tested. In particular, a property is called \emph{(locally)
testable} if there is a tester with query complexity that is a constant
depending only on the distance parameter $\delta$. Property testing
was initiated by the works of Blum, Luby and Rubinfeld~\cite{BLR}
and Babai, Fortnow and Lund~\cite{BFL} and was formally defined
by Rubinfeld and Sudan~\cite{RubSud96}. The systematic exploration
of property testing was initiated by Goldreich, Goldwasser, and Ron~\cite{GGR}
who expanded the scope of property testing to combinatorial and graph-theoretic
properties (all previously considered properties were algebraic).
In the subsequent years, a rich collection of properties have been
shown to be testable~\cite{AlonShapira1,AlonShapira2,AFNS,BCLSSV,ParRonSam,AKNS,AKKLR,KR04,JPRZ04}
and many property tests have ended up playing a crucial role in constructions
of probabilistically checkable proofs~\cite{AroSaf,ALMSS,BelGolSud98,Has97,SamTre06}.

The rich collection of successes in property testing raises a natural
question: Why are so many different properties turning out to be locally
testable? Are there some broad {}``features'' of properties that
make them amenable to such tests? Our work is part of an attempt to
answer such questions. Such questions are best understood by laying
out broad (infinite) classes of properties (hopefully some of them
are new) and showing them to be testable (or characterizing the testable
properties within the class). In this paper we introduce a new such
class of properties, and show that (1) they are locally testable,
and (2) that they contain infinitely many new properties that were
not previously known to be testable.

\paragraph{The properties, and our results:}

The broad scope of properties we are interested in are properties
that view their domain $D$ as a vector space and are invariant under
linear transformations of the domain. Specifically, we consider the
domain $D=\cube^{n}$, the vector space of $n$-dimensional Boolean
vectors, and the range $R=\cube$. In this setting, a property $\calf$
is said to be {\em linear-invariant} if for every $f\in\calf$
and linear map $L:\cube^{n}\to\cube^{n}$ we have that $f\circ L\in\calf$.
Specific examples of linear-invariant properties that were previously
studied (esp. in the Boolean setting) include that of linearity, studied
by Blum et al.~\cite{BLR} and Bellare et al.~\cite{BCHKS}, and
the property of being a {}``moderate-degree'' polynomial (aka Reed-Muller
codeword) studied by Alon et al.~\cite{AKKLR}%
\footnote{In the literature, the term low-degree polynomial is typically used
for polynomials whose degree is smaller than the field size. In the
work of \cite{AKKLR} the degrees considered are larger than the field
size, but are best thought of as large constants. The phrase {}``moderate-degree''
above describes this setting of parameters.%
}. While the tests in the above mentioned works potentially used all
features of the property being tested, Kaufman and Sudan~\cite{KaufmanSudan08}
show that the testability can be attributed principally to the linear-invariance
of the property. However their setting only considers {\em linear}
properties, i.e., $\calf$ itself is a vector space over $\cube$
and this feature plays a key role in their results: It lends an algebraic
flavor to all the properties being tested and plays a central role
in their analysis.

We thus ask the question: Does linear-invariance lead to testability
even when the property $\calf$ is not linear? The one previous work
in the literature that gives examples of non-linear linear-invariant
properties is Green~\cite{Gre03} where a test for the property of
being {}``triangle-free'' is described. A function $f:\cube^{n}\to\cube$
is said to be {\em triangle-free} if for every $x,y\in\cube^{n}$
it is the case that at least one of $f(x),f(y),f(x+y)$ does not equal
$1$. The property of being triangle-free is easily seen to be linear-invariant
and yet not linear. Green~\cite{Gre03} shows that the natural test
for this property does indeed work correctly, though the analysis
is quite different from that of typical algebraic tests and is more
reminiscent of graph-property testing. In particular, Green develops
an algebraic regularity lemma to analyze this test. (We note that
the example above is not the principal objective of Green's work,
which is directed mostly at abelian groups $D$ and $R$. The above
example with $D=\cube^{n}$ and $R=\cube$ is used mainly as a motivating
example.)

Motivated by the above example, we consider a broad class of properties
that are linear-invariant and non-linear. A property in our class
is given by $k$ vectors $v_{1},\ldots,v_{k}$ in the $k$-dimensional
space $\cube^{k}$. (Throughout this paper we think of $k$ as a constant.)
These $k$ vectors uniformly specify a family $\calf=\calf_{n;v_{1},\ldots,v_{k}}$
for every positive integer $n$, containing all functions that, for
every linear map $L:\cube^{k}\to\cube^{n}$ take on the value $0$
on at least one of the points $L(v_{1}),\ldots,L(v_{k})$. (In Appendix~\ref{sec:Non-monotone-properties}
we consider an even more generalized class of properties where the
forbidden pattern of values for $f$ is not $1^{k}$ but some other
string and show a limited set of cases where we can test such properties.)
To see that this extends the triangle-freeness property, note that
triangle-freeness is just the special case with $k=3$ and $v_{1}=\langle100\rangle$,
$v_{2}=\langle010\rangle$, $v_{3}=\langle110\rangle$. Under different
linear transforms, these three points get mapped to all the different
triples of the form $x,y,x+y$ and so $\calf_{n;v_{1},v_{2},v_{3}}$
equals the class of triangle-free functions.

Before giving a name to our class of functions, we make a quick observation.
Note that the property specified by $v_{1},\ldots,v_{k}$ is equivalent
to the property specified by $T(v_{1}),\ldots,T(v_{k})$ where $T$
is a non-singular linear map from $\cube^{k}\to\cube^{k}$. Thus the
property is effectively specified by the dependencies among $v_{1},\ldots,v_{k}$
which are in turn captured by the matroid%
\footnote{For the sake of completeness we include a definition of matroids in
Appendix~\ref{sec:def-matroid}. However a reader unfamiliar with
this notion may just use the word matroid as a synonym for a finite
collection of binary vectors, for the purposes of reading this paper. %
} underlying $v_{1},\ldots,v_{k}$. This leads us to our nomenclature:

\begin{defn}\label{def:matfree} Given a (binary, linear) matroid
$\calm$ represented by vectors $v_{1},\ldots,v_{k}\in\cube^{k}$,
the property of being {\em $\calm$-free} is given by, for every
positive integer $n$, the family \[
\calf_{\calm}=\{f:\cube^{n}\to\cube|\forall~{\rm linear}~L:\cube^{k}\to\cube^{n},\langle f(L(v_{1})),\ldots,f(L(v_{k}))\rangle\ne1^{k}\}.\]
 \end{defn}

The property of being $\calm$-free has a natural $k$-local test
associated with it: Pick a random linear map $L:\cube^{k}\to\cube^{n}$
and test that $\langle f(L(v_{1})),\ldots,f(L(v_{k}))\rangle\ne1^{k}$.
Analyzing this test turns out to be non-trivial, and indeed we only
manage to analyze this in special cases.

Recall that a matroid $\calm=\{v_{1},\ldots,v_{k}\}$, $v_{i}\in\cube^{k}$,
forms a {\em graphic matroid} if there exists a graph $G$ on $k$
edges with the edges being associated with the elements $v_{1},\ldots,v_{k}$
such that a set $S\subseteq\{v_{1},\ldots,v_{k}\}$ has a linear dependency
if and only if the associated set of edges contains a cycle. In this
paper, we require that the graph $G$ be simple, that is, without
any self-loops or parallel edges. Our main theorem shows that the
property $\calf$ associated with a graphic matroid $v_{1},\ldots,v_{k}\in\cube^{k}$
is testable.

\begin{theorem} \label{thm:main1} For a graphic matroid $\calm$,
the property of being $\calm$-free is locally testable. Specifically,
let $\calm=\{v_{1},\ldots,v_{k}\}$ be a graphic matroid. Then, there
exists a function $\tau:\mathbb{R}^{+}\rightarrow\mathbb{R}^{+}$
and a $k$-query tester that accepts members of ${\calm}$-free functions
with probability one and rejects functions that are $\epsilon$-far
from being ${\calm}$-free with probability at least $\tau(\eps)$.
\end{theorem}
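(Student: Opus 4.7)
Let $G=(V,E)$ be the simple graph with $|E|=k$ edges and $|V|=m$ vertices realizing the graphic matroid $\calm$. Since the property depends only on the matroid, I work with the canonical graphic-matroid representation obtained from a non-singular change of basis on $\cube^k$: choose formal coordinates indexed by $V$ and set $v_e = e_u + e_v$ for each edge $e = \{u,v\}$. A uniformly random linear map $L:\cube^k\to\cube^n$ then corresponds, on the relevant coordinates, to choosing an i.i.d.\ uniform assignment $y:V\to\cube^n$, and $L(v_e) = y_u + y_v$. The tester accordingly samples $y$ and queries $f(y_u + y_v)$ for every edge, rejecting iff all queries return $1$; completeness is immediate. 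To prove soundness, I need to lower-bound the rejection probability
\[
t(f) \;=\; \Exp_{y:V\to\cube^n}\!\left[\prod_{\{u,v\}\in E} f(y_u+y_v)\right]
\]
by some $\tau(\eps)>0$ whenever $f$ is $\eps$-far from $\calf_\calm$. This is the natural removal lemma for the pattern $G$: if $t(f) \le \tau(\eps)$ then $f$ can be made $\calm$-free by changing it on at most an $\eps$-fraction of points.

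I would establish this removal lemma by adapting Green's Fourier-analytic argument for triangle-freeness. Apply an algebraic regularity lemma to decompose $f = f_{\mathrm{str}} + f_{\mathrm{psd}}$ where $f_{\mathrm{str}}$ is constant on cosets of a subspace $H \le \cube^n$ of codimension bounded by a function of the regularity parameter, and $\|f_{\mathrm{psd}}\|_{U^2}$ is tiny. Expanding the product defining $t(f)$ into $2^k$ terms indexed by subsets $S\subseteq E$ (with $f_{\mathrm{psd}}$ on edges in $S$ and $f_{\mathrm{str}}$ on the rest), I want every term with $S\ne\emptyset$ to be negligible. This is where the graphic hypothesis enters: I must show that the linear system $\{x_u+x_v : \{u,v\}\in E\}$ has Green--Tao complexity $1$. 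For each edge $e=\{u,v\}$, partition the remaining edges into those incident to $u$ and those not incident to $u$; because $G$ is simple, neither class contains a subset whose associated vectors sum to $x_u+x_v$ (such a subset would correspond to a path from $u$ to $v$ contained in one class, impossible since one class only involves the vertex $u$ and the other never touches $u$). A two-fold Cauchy--Schwarz then bounds each $f_{\mathrm{psd}}$-term by $\|f_{\mathrm{psd}}\|_{U^2}$ times $\|f\|_\infty^{k-1}$, giving $t(f) = t(f_{\mathrm{str}}) \pm o(1)$.

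It remains to analyze $t(f_{\mathrm{str}})$. Since $f_{\mathrm{str}}$ descends to a function $\bar f$ on the quotient $\cube^n/H$ (whose size is bounded in terms of $\eps$), $t(f_{\mathrm{str}})$ is essentially a finite sum over all coset-assignments $(c_w)_{w\in V}$ of the product of $\bar f$-values on edge-sums $c_u+c_v$. If this sum is small, then only a bounded number of "bad" cosets $c\in\cube^n/H$ can appear as an edge-coset $c_u+c_v$ in any fully-$1$ pattern. I would then zero out $f$ on each bad coset; this modifies $f$ on a fraction of inputs bounded explicitly by $\eps$, and simultaneously destroys every surviving $\calm$-pattern of $f$, establishing $\eps$-closeness to $\calf_\calm$.

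The main obstacle is the combined technical passage: proving the $1$-complexity claim with the precise partition used to drive Cauchy--Schwarz, then ensuring that the "zero-out bad cosets" step really eliminates every remaining pattern once the $f_{\mathrm{psd}}$ error has been absorbed. A single linear map $L$ can revisit a coset via several vertices, and different vertex-cosets may collide into the same edge-coset, so a careful quantitative accounting is needed to choose $\tau(\eps)$ small enough that the bad-coset mass indeed stays below $\eps$ while no pattern escapes the modification.
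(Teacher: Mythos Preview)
Your two main ingredients match the paper's: the complexity-$1$ argument for graphic matroids via the edge partition ``incident to $u$'' versus ``not incident to $u$'' is exactly Lemma~\ref{lem:graphicisone}, and the plan to control the count by a $U^2$/Fourier bound once complexity~$1$ is known is Proposition~\ref{prop:von Neumann}. The gap is in your final step. The function $\bar f$ on the quotient is $[0,1]$-valued, so ``fully-$1$ pattern'' has no meaning there, and the assertion ``if $t(f_{\mathrm{str}})$ is small then only a bounded number of bad cosets can appear as an edge-coset in any fully-$1$ pattern'' does not follow: a small \emph{average} over coset-assignments says nothing about how many individual cosets are dense, and a single tuple of dense cosets supporting a pattern can coexist with $t(f_{\mathrm{str}})$ being tiny. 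You have not said which cosets get zeroed, why the mass stays below $\eps$, or why zeroing them kills every pattern of the original $\{0,1\}$-valued $f$ (as opposed to $f_{\mathrm{str}}$).

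The paper sidesteps this by arguing in the direct direction rather than via a removal lemma. After applying Green's regularity lemma (Lemma~\ref{lemma:Green's regularity}) to obtain $H$, it \emph{rounds $f$ first}: define $f^R$ to be $0$ on every non-uniform coset and on every coset of density below a threshold $b(\eps)$, and equal to $f$ elsewhere. This alters at most $a(\eps)+b(\eps)<\eps$ of the points, so since $f$ is $\eps$-far, $f^R$ still contains $\calm$ at some linear map $L$. By construction every coset $L(v_i)+H$ is then both $a(\eps)$-uniform and has density at least $b(\eps)$. One now counts patterns only inside $H$: writing $f_i=f_{L(v_i)+H}$ and expanding $\Exp_{\phi:\cube^k\to H}\prod_i f_i(\phi(v_i))$ around the means, the main term is at least $b(\eps)^k$ and each of the $2^k-1$ remaining terms is at most $a(\eps)^{1/2}$ by the complexity-$1$ von~Neumann inequality applied on $H$. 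This yields the rejection lower bound directly, with no global removal step and none of the bookkeeping you flagged as the main obstacle.
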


Our bound on $\tau$ is quite weak. We let $W(t)$ denote a tower
of twos with height $\left\lceil t\right\rceil $. Our proof only
guarantees that $\tau(\eps)\geq W(\polyn(1/\eps))^{-1}$, a rather
fast vanishing function. We do not know if such a weak bound is required
for any property we consider.

We describe the techniques used to prove this theorem shortly (which
shed light on why our bound on $\tau$ is so weak) but first comment
on the implications of the theorem . First, note that for a graphic
matroid it is more natural to associate the property with the underlying
graph. We thus use the phrase $G$-free to denote the property of
being $\calm$-free where $\calm$ is the graphic matroid of $G$.
This terminology recovers the notion of being triangle-free, as in
\cite{Gre03}, and extends to cover the case of being $k$-cycle free
(also considered in \cite{Gre03}). But it includes every other graph
too!

Syntactically, Theorem~\ref{thm:main1} seems to include infinitely
many new properties (other than being $k$-cycle free). However, this
may not be true semantically. For instance the property of being triangle-free
is essentially the same as being $G$-free for every $G$ whose biconnected
components are triangles. Indeed, prior to our work, it was not even
explicitly noted whether being $C_{k}$-free is essentially different
from being triangle-free. (By {}``essentially'', we ask if there
exist triangle-free functions that are {\em far} from being $C_{k}$-free.)
It actually requires careful analysis to conclude that the family
of properties being tested include (infinitely-many) new ones. Our
second theorem addresses this point.

\begin{theorem}\label{thm:main2} The class of $G$-free properties
include infinitely many distinct ones. In particular:
\begin{enumerate}
\item For every odd $k$, if $f$ is $C_{k+2}$-free, then it is also $C_{k}$-free.
Conversely, there exist functions $g$ that are $C_{k}$-free but
far from being $C_{k+2}$-free. 
\item If $k\leq\ell$ and $f$ is $K_{k}$-free, then it is also $K_{\ell}$-free.
On the other hand, if $k\geq3$ and $\ell\geq\binom{k}{2}+2$ then
there exists a function $g$ that is $K_{\ell}$-free but far from
being $K_{k}$-free. 
\end{enumerate}
\end{theorem}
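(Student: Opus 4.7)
Both ``only if'' directions are immediate. For Part 1, a $C_k$-copy $(y_1,\dots,y_k)$ in $f$ (i.e., $\sum_i y_i=0$ with each $f(y_i)=1$) extends to a $C_{k+2}$-copy via the padded tuple $(y_1,\dots,y_k,y_1,y_1)$, whose sum is $2y_1=0$ over $\mathbb{F}_2$; contrapositively, $C_{k+2}$-freeness implies $C_k$-freeness. For Part 2, a $K_\ell$-copy in $f$ can be parameterized as a tuple $(w_1,\dots,w_\ell)\in(\cube^n)^\ell$ with $f(w_u+w_v)=1$ for every $u\neq v$ (each edge $\{u,v\}$ of $K_\ell$ maps to $w_u+w_v$ under the corresponding linear map); restricting to $(w_1,\dots,w_k)$ yields a $K_k$-copy, so $K_k$-freeness implies $K_\ell$-freeness.

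For the converse in Part 2, fix $n\geq k$, choose $w_1,\dots,w_k\in\cube^n$ linearly independent, let $U=\mathrm{span}(w_1,\dots,w_k)$, pick a complementary subspace $V$ (so $\cube^n=U\oplus V$), and set $g=\mathbf{1}_{S+V}$ with $S=\{w_i+w_j:1\leq i<j\leq k\}$. For every $(t_1,\dots,t_k)\in V^k$, the tuple $(w_1+t_1,\dots,w_k+t_k)$ is a $K_k$-copy in $g$, giving $|V|^k$ such ordered copies; moreover each point of $S+V$ lies in exactly $|V|^{k-1}$ of these copies, so a fractional-matching / LP-duality argument bounds the minimum set of support points meeting every copy by $|V|^k/|V|^{k-1}=|V|=2^{n-k}$, whence $g$ is $\Omega(2^{-k})$-far from $K_k$-free. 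Conversely, for any putative $K_\ell$-copy $(z_1,\dots,z_\ell)$ in $g$, translate so $z_1=0$; writing $z_u=r_u+t_u$ with $r_u\in U, t_u\in V$, membership $z_u\in S+V$ forces $r_u\in S$ for $u\geq 2$, and the edge condition $g(z_u+z_v)=1$ gives $r_u+r_v\in S$ for all distinct $u,v\geq 2$. Since $|S|=\binom{k}{2}$ and $\ell-1\geq\binom{k}{2}+1$, pigeonhole yields $r_u=r_v$ for some $u\neq v$, whence $r_u+r_v=0\notin S$, a contradiction.

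For the converse in Part 1 we use an analogous construction tailored to the cycle matroid. Pick $v_1,\dots,v_{k+1}\in\cube^n$ linearly independent, set $v_{k+2}=v_1+\cdots+v_{k+1}$, choose a complement $V$ of $\mathrm{span}(v_1,\dots,v_{k+1})$, and let $g=\mathbf{1}_{\{v_1,\dots,v_{k+2}\}+V}$. For any candidate $C_k$-copy $(y_1,\dots,y_k)$, decomposing each $y_i=v_{j_i}+u_i$ with $j_i\in[k+2]$ and $u_i\in V$ reduces $\sum y_i=0$ to $\sum_{i=1}^k v_{j_i}=0$ in $\mathrm{span}(v_1,\dots,v_{k+1})$. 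Setting $c_j=\#\{i:j_i=j\}$ and using $v_{k+2}=\sum_{j\leq k+1}v_j$ together with linear independence gives $c_j\equiv c_{k+2}\pmod 2$ for each $j\in[k+1]$. If all $c_j$ are even, then $\sum_j c_j=k$ is even, contradicting $k$ odd; if all are odd, then $\sum_j c_j\geq k+2>k$, equally impossible. Hence $g$ is $C_k$-free. On the other hand, taking $(j_1,\dots,j_{k+2})$ any permutation of $[k+2]$ together with arbitrary $u_1,\dots,u_{k+1}\in V$ and $u_{k+2}=\sum_{i\leq k+1}u_i$ produces $(k+2)!\cdot|V|^{k+1}$ ordered $C_{k+2}$-copies in $g$, and an identical LP-duality argument yields that $g$ is $\Omega(2^{-k-1})$-far from $C_{k+2}$-free.

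The main technical obstacle in both constructions is the ``far from'' conclusion: we must verify the exhibited copies cannot all be destroyed by flipping a subconstant fraction of $g$'s values. This is handled uniformly by double counting, exploiting the product form of the support ($S+V$ or its cycle-matroid analogue) so that each support point lies in only an $O(1/|V|)$ fraction of all copies; flipping fewer than $|V|=2^{n-O(k)}$ values therefore leaves at least one copy intact, and a single surviving copy suffices to witness non-freeness.
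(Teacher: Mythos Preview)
Your proof is correct and follows essentially the same approach as the paper. Your indicator functions $g=\mathbf{1}_{S+V}$ and $g=\mathbf{1}_{\{v_1,\dots,v_{k+2}\}+V}$ are precisely the paper's ``canonical functions'' $f_{\calm}$ for $\calm$ the graphic matroid of $K_k$ and $C_{k+2}$ respectively, and your pigeonhole argument for $K_\ell$-freeness and your parity argument for $C_k$-freeness are exactly the content of the paper's Lemmas on matroid homomorphisms (the paper packages these via an intermediate notion of ``matroid homomorphism'' together with an odd-girth obstruction, whereas you carry out the same computations directly). The only genuine methodological difference is in the ``far from'' step: the paper shows $f_{\calm}$ is $2^{-k}$-far by picking a random linear map $L(x)=\langle x,L_1(x)\rangle$ and using a union bound, while you obtain the same bound by double-counting incidences between support points and copies---these are two phrasings of the same averaging argument.
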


\paragraph{Techniques:}

Our proof of Theorem~\ref{thm:main1} is based on Green's analysis
of the triangle-free case~\cite{Gre03}. To analyze the triangle-free
case, Green develops a {}``regularity'' lemma for groups, which
is analogous to Szemerédi's regularity lemma for graphs. In our setting,
Green's regularity lemma shows how, given any function $f:\cube^{n}\to\cube$,
one can find a subgroup $H$ of $\cube^{n}$ such that the restriction
of $f$ to almost all cosets of $H$ is {}``regular'', where {}``regularity''
is defined based on the {}``Fourier coefficients'' of $f$. (These
notions are made precise in Section~\ref{sub:Fourier-Analysis}.)

This lemma continues to play a central role in our work as well, but
we need to work further on this. In particular, a priori it is not
clear how to use this lemma to analyze $\calm$-freeness for {\em
arbitrary} matroids $\calm$. To extract a large feasible class of
matroids we use a notion from a work of Green and Tao~\cite{GreenTao:LinearPrimes}
of the complexity of a linear system (or matroids, as we refer to
them). The {}``least complex'' matroids have complexity 1, and we
show that the regularity lemma can be applied to all matroids of complexity
$1$ to show that they are testable (see Section~\ref{sec:Matroids-testable}).

The notion of a $1$-complex matroid is somewhat intricate, and a
priori it may not even be clear that this introduces new testable
properties. We show (in Section~\ref{sec:Graphic-is-1}) that these
properties actually capture all graphic matroids which is already
promising. Yet this is not a definite proof of novelty, and so in
Section~\ref{sec:Infinitely-properties} we investigate properties
of graphic matroids and give some techniques to show that they are
{}``essentially'' different. Our proofs show that if two (binary)
matroids are not {}``homomorphically'' equivalent (in a sense that
we define) then there is an essential difference between the properties
represented by them.

\paragraph{Significance of problems/results:}

We now return to the motivation for studying $\calm$-free properties.
Our interest in these families is mathematical. We are interested
in broad classes of properties that are testable; and invariance seems
to be a central notion in explaining the testability of many interesting
properties. Intuitively, it makes sense that the symmetries of a property
could lead to testability, since this somehow suggests that the value
of a function at any one point of the domain is no more important
than its values at any other point. Furthermore this intuition is
backed up in many special cases like graph-property testing (where
the family is invariant under all permutations of the domain corresponding
to relabelling the vertex names). Indeed this was what led Kaufman
and Sudan~\cite{KaufmanSudan08} to examine this notion explicitly
in the context of algebraic functions. They considered families that
were linear-invariant and {\em linear}, and our work is motivated
by the quest to see if the latter part is essential.

In contrast to other combinatorial settings, linear-invariance counts
on a (quantitatively) very restricted collection of invariances. Indeed
the set of linear transforms is only quasi-polynomially large in the
domain (which may be contrasted with the exponentially large set of
invariances that need to hold for graph-properties). So ability to
test properties based on this feature is mathematically interesting
and leads to the question: what kind of techniques are useful in these
settings. Our work manages to highlight some of those (in particular,
Green's regularity lemma).

\paragraph{Parallel works:}

After completing our work, we learned from Asaf Shapira that, independently
of us, $\calm$-freeness for an arbitrary matroid $\calm$ has been
shown to be testable in Shapira's recent preprint \cite{Shapira}.
His result solves a question that we posed as open in an earlier version
of this paper. His result is built on the work of Král', Serra, and
Vena in~\cite{KralSerraVena1}, where an alternate proof of Green's
cycle-freeness result is provided. Essentially the authors in~\cite{KralSerraVena1}
demonstrate a reduction from testing freeness of the cycle matroid
in a function to testing freeness of the cycle subgraph in a graph,
and then they apply regularity lemmas for graphs to analyze the number
of cycles in a function far from being cycle-free. In this manner,
the authors show that Theorem~\ref{thm:main1} holds as well. By
extending this method and utilizing hypergraph regularity lemmas,
Shapira~\cite{Shapira} and Král', Serra, and Vena in a followup
work~\cite{KralSerraVena2} show that arbitrary monotone matroid-freeness
properties are testable.

We remark that our proofs are very different from those in~\cite{KralSerraVena1},~\cite{KralSerraVena2},
and~\cite{Shapira}, and in particular, our view on invariance leads
us to develop techniques to show that syntactically different properties
are indeed distinct.

\paragraph{Organization of this paper:}

In the following section (Section~\ref{sec:Definitions}) we define
a slightly broader class of properties that we can consider (including
some non-monotone properties). We also define the notion of 1-complexity
matroids which forms a central tool in our analysis of the tests.
In Section~\ref{sec:Matroids-testable} we show that for any 1-complexity
matroid $\calm$, $\calm$-freeness is testable. In Section~\ref{sec:Graphic-is-1}
we show that graphic matroids are 1-complexity matroids. Theorem~\ref{thm:main1}
thus follows from the results of Section~\ref{sec:Matroids-testable}~and~\ref{sec:Graphic-is-1}.
In Section~\ref{sec:Infinitely-properties} we prove that there are
infinitely many distinct properties among $G$-free properties. Finally,
in Appendix~\ref{sec:Non-monotone-properties}, we include results
on testing some non-monotone properties, along with some {}``collapse''
results showing that many non-monotone properties collapse to some
simple ones in Appendix~\ref{sub:Characterization}.

\section{Additional definitions, results, and overview of proofs\label{sec:Definitions}}

In this section, we describe some further results that we present
in the paper and give an outline of proofs.

\subsection{Extensions to non-monotone families}

We start with a generalization of Definition \ref{def:matfree} to
a wider collection of forbidden patterns.

\begin{defn} Given $\Sigma\in\cube^{k}$ and a binary matroid $\calm$
represented by vectors ~$v_{1},\ldots,v_{k}\in\cube^{k}$, the property
of being {\em $(\calm,\Sigma)$-free} is given by, for every positive
$n$, the family $\calf_{(\calm,\Sigma)}=\{f:\cube^{n}\to\cube|\forall~{\rm linear}~L:\cube^{k}\to\cube^{n},\langle f(L(v_{1})),\ldots,f(L(v_{k}))\rangle\ne\Sigma\}$.
\end{defn}

If for some linear $L:\cube^{k}\to\cube^{n}$, $\langle f(L(v_{1})),\ldots,f(L(v_{k}))\rangle=\Sigma$,
then we say $f$ {\em contains $(\calm,\Sigma)$ at $L$}. Also,
to be consistent with Definition \ref{def:matfree}, we suppress mention
of $\Sigma$ when $\Sigma=1^{k}$.

Recall that a property $\mathcal{P}\subseteq\{D\to\{0,1\}\}$ is said
to be {\em monotone} if $f\in{\cal {P}}$ and $g\prec f$ implies
$g\in{\cal {P}}$, where $g\prec f$ means that $g(x)\leq f(x)$ for
all $x\in D$. \begin{obs} For a binary matroid $\calm$, $(\calm,\Sigma)$-freeness
is a monotone property if and only if $\Sigma=1^{k}$. \end{obs}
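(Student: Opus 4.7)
The plan is to prove the biconditional by treating each direction separately; both are short and structural.

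For the direction $\Sigma = 1^k \Rightarrow$ monotone, I would just chase definitions. Given $f \in \calf_{(\calm, 1^k)}$ and $g \prec f$, fix any linear $L : \cube^k \to \cube^n$. By hypothesis some coordinate $i$ satisfies $f(L(v_i)) = 0$, and the pointwise inequality $g \leq f$ then forces $g(L(v_i)) = 0$ as well, so the tuple $\langle g(L(v_1)), \ldots, g(L(v_k))\rangle$ is still not $1^k$. Hence $g \in \calf_{(\calm, 1^k)}$, establishing monotonicity.

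For the converse direction $\Sigma \neq 1^k \Rightarrow$ not monotone, the plan is to exhibit an explicit pair $g \prec f$ witnessing the failure. Take $f \equiv 1$; this lies in $\calf_{(\calm, \Sigma)}$ trivially because it always produces the tuple $1^k$, which differs from $\Sigma$ by hypothesis. To build the smaller function $g$, I would pick $n \geq k$ and an injective linear map $L_0 : \cube^k \to \cube^n$ (for example the embedding $x \mapsto (x, 0^{n-k})$). Injectivity together with distinctness of the representing vectors $v_1, \ldots, v_k$ makes the points $L_0(v_1), \ldots, L_0(v_k)$ pairwise distinct, so the assignment $g(L_0(v_i)) := \Sigma_i$ for each $i$ and $g(x) := 1$ elsewhere is well-defined. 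By construction $g \leq f$ pointwise and $\langle g(L_0(v_1)), \ldots, g(L_0(v_k))\rangle = \Sigma$, so $g$ contains $(\calm, \Sigma)$ at $L_0$ and hence $g \notin \calf_{(\calm, \Sigma)}$, contradicting monotonicity.

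The only real obstacle is checking that the definition of $g$ is consistent at coinciding points $L_0(v_i) = L_0(v_j)$, which via injectivity of $L_0$ reduces to the condition that $v_i = v_j$ implies $\Sigma_i = \Sigma_j$. This is automatic under the standard convention that a binary matroid is represented by pairwise distinct vectors. In the degenerate alternative where $v_i = v_j$ but $\Sigma_i \neq \Sigma_j$, no function can ever contain $(\calm, \Sigma)$ anywhere, so the property is vacuously the entire function class and hence trivially monotone; this is an edge case outside the intended scope of the observation and should simply be noted in passing.
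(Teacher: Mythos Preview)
The paper states this observation without proof, so there is no argument in the paper to compare against. Your proposal is correct: the forward direction is an immediate definition chase, and for the converse the pair $f\equiv 1$ and its local modification $g$ is the natural witness to non-monotonicity. Your handling of the consistency issue for $g$ is also appropriate; indeed, if two representing vectors coincide while the corresponding entries of $\Sigma$ differ, the property becomes all of $\{\cube^n \to \cube\}$ and is trivially monotone, so the observation as literally stated presupposes distinct $v_i$ (implicit throughout the paper).
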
 

In addition to our main results (Theorems \ref{thm:main1} and \ref{thm:main2})
on monotone properties, we also obtain local testability results for
a limited class of non-monotone properties.

\begin{theorem}\label{thm:nonmontest} Let $C_{k}$ denote the cycle
on $k$ vertices and let $\Sigma$ be an arbitrary element of $\cube^{k}$.
Then, there exists a function $\tau:\mathbb{R}^{+}\rightarrow\mathbb{R}^{+}$
and a $k$-query tester that accepts members of $\calf_{(C_{k},\Sigma)}$
with probability $1$ and rejects $f$ that are $\eps$-far from $\calf_{(C_{k},\Sigma)}$
with probability at least $\tau(\eps)$. \end{theorem}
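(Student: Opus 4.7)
The natural $k$-query tester picks $x_{1},\dots,x_{k-1}\in\cube^{n}$ uniformly, sets $x_{k}:=x_{1}+\cdots+x_{k-1}$, and rejects iff $(f(x_{1}),\dots,f(x_{k}))=\Sigma$. This tester always accepts members of $\calf_{(C_{k},\Sigma)}$, so the task is to lower-bound the rejection probability $p_{\Sigma}(f)$ whenever $f$ is $\eps$-far from $\calf_{(C_{k},\Sigma)}$. My plan is to extend the regularity-based analysis underlying Theorem~\ref{thm:main1} (which handles the monotone cycle case $\Sigma=1^{k}$) to arbitrary $\Sigma$, adding a non-monotone counting lemma and a two-stage removal argument.

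First I invoke Green's arithmetic regularity lemma exactly as in the proof of Theorem~\ref{thm:main1}, obtaining a subgroup $H\leq\cube^{n}$ of bounded index $M=M(\eps)$ with $f$ being $\gamma$-regular on all but a $\gamma$-fraction of cosets. Writing $\phi_{1}(t)=t$, $\phi_{0}(t)=1-t$ (so that $\phi_{\Sigma_{i}}(f(x))$ is the indicator of $f(x)=\Sigma_{i}$), and letting $d_{y}$ denote the density of $f$ on $y+H$, I establish the counting identity
\[
p_{\Sigma}(f) \;=\; \Exp_{y_{1},\dots,y_{k-1}}\prod_{i=1}^{k}\phi_{\Sigma_{i}}(d_{y_{i}}) \;\pm\; O(\gamma),\qquad y_{k}:=y_{1}+\cdots+y_{k-1}.
\]
Its proof expands each $\phi_{\Sigma_{i}}(f)=a_{i}f+b_{i}$ with $a_{i}\in\{\pm1\}$ and exploits the fact that $C_{k}$ has a unique minimal dependency $\sum_{i}v_{i}=0$: for every proper $S\subsetneq[k]$ the expectation $\Exp\prod_{i\in S}f(L(v_{i}))$ factors into single-point averages (well-approximated by the $d_{y}$'s under $\gamma$-regularity), and only the $S=[k]$ term requires Green's Fourier triangle-counting estimate.

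Assuming $p_{\Sigma}(f)<\tau$, I construct $g\in\calf_{(C_{k},\Sigma)}$ with $\delta(f,g)\leq\eps$ in two stages. \emph{Stage A (rounding):} for a threshold $\alpha$, replace $f|_{y+H}$ by the constant $\tilde f(y)\in\cube$ nearest to $d_{y}$. Cosets with $d_{y}\in[\alpha,1-\alpha]$ (``medium'' cosets) are expensive to round, but the counting identity forces their fraction to be at most $O(\tau^{1/k}/\alpha)$: otherwise a constant fraction of quotient tuples would have every factor $\phi_{\Sigma_{i}}(d_{y_{i}})\geq\alpha$, contradicting $p_{\Sigma}<\tau$. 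Stage A therefore costs $O(\alpha)+O(\tau^{1/k}/\alpha)$, balanced by $\alpha\asymp\tau^{1/(2k)}$. \emph{Stage B (cleanup):} the rounded quotient $\tilde f:\cube^{n}/H\to\cube$ still has at most $O(\tau+\alpha)\,M^{k-1}$ bad quotient tuples (apply the counting lemma to $\tilde f$), each of which I destroy by flipping a single well-chosen point of $f$ at the $\cube^{n}$ level; the total Stage B cost is another $O(\tau+\alpha)$ fraction. Balancing $\alpha,\gamma,\tau$ against $\eps$ and $M(\eps)$ yields the same tower-type bound $\tau(\eps)\geq W(\polyn(1/\eps))^{-1}$ as in Theorem~\ref{thm:main1}.

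The main obstacle is Stage B: unlike the monotone case, flipping $f$ (or $\tilde f$) to destroy one bad pattern can simultaneously create new ones, because a single coset or point may occupy any of the $k$ cycle positions, which carry differing target values $\Sigma_{i}$. To handle this I plan to exploit the dihedral symmetry of $C_{k}$ --- any rotation or reflection of an input $L$ is itself a valid linear map (using $\sum_{i}v_{i}=0$), so $p_{\Sigma}$ is invariant under the dihedral action on $\Sigma$, which lets me symmetrize and ``amortize'' the new-pattern bookkeeping. Combined with the fact that $C_{k}$'s unique minimal dependency rules out any spurious cancellations between distinct quotient tuples, a greedy point-flip procedure then achieves a net-decrease in bad patterns at controlled amortized cost. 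Making this quantitative balance rigorous is the most delicate part of the argument, and it is essentially the reason Theorem~\ref{thm:nonmontest} is stated only for cycle matroids, not for general non-monotone graphic matroids.
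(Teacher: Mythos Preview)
Your approach has two genuine gaps.

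\textbf{Stage A.} The assertion that $p_{\Sigma}(f)<\tau$ forces the fraction of medium cosets to be $O(\tau^{1/k}/\alpha)$ is false. Take $\cube^{n}/H\cong\cube^{2}$ with generators $a,b$; set $d_{0}=d_{a+b}=0$ and $d_{a}=d_{b}=\tfrac12$. For $\Sigma=1^{3}$ the quotient count $\Exp_{y_{1},y_{2}}d_{y_{1}}d_{y_{2}}d_{y_{1}+y_{2}}$ vanishes (no triple summing to zero lies entirely in $\{a,b\}$), so the counting identity gives $p_{\Sigma}(f)\approx 0$; yet half the cosets are medium and your rounding cost is $\tfrac14$ regardless of how small $\tau$ is. The same phenomenon occurs for non-monochromatic $\Sigma$. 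Smallness of a \emph{product} average over constrained tuples simply does not control the fraction of individual cosets with intermediate density.

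\textbf{Stage B.} A bad quotient tuple corresponds to $|H|^{k-1}$ actual patterns in the coset-constant function; flipping one point of $f$ at the $\cube^{n}$ level does not destroy it. If instead you flip entire cosets, you face precisely the create-new-patterns problem you flag, and invoking dihedral symmetry does not by itself yield an amortization inequality---rotations permute the roles of the $k$ positions but do not prevent a single coset-flip from participating in many new violations.

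The paper avoids both difficulties by arguing in the forward direction rather than via removal. It rounds $f$ to $f^{R}$ \emph{without touching uniform cosets of intermediate density} (on such cosets $f^{R}=f$), so $\delta(f,f^{R})\le a(\eps)+b(\eps)<\eps$ holds with no bound whatsoever on the number of medium cosets. Since $f^{R}$ must then contain a $(\calm,\Sigma)$-pattern at some $L$, each coset $L(v_{i})+H$ automatically has $\mu_{\sigma_{i}}\ge b(\eps)$. A two-case analysis---a direct Fourier computation exploiting the single cyclic dependency when at least one involved coset is uniform, and a union bound when all involved cosets are non-uniform (hence have density $\ge\eta>\tfrac12$ on the correct symbol)---then lower-bounds the fraction of patterns of $f$ inside these $k$ cosets. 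No cleanup stage is needed, so the non-monotone flip-creates-patterns obstacle never arises.
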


However, in strong contrast to Theorem \ref{thm:main2}, we show that
unless $\Sigma$ equals $0^{k}$ or $1^{k}$, the class of $(C_{k},\Sigma)$-freeness
properties is not at all very rich semantically.

\begin{theorem} \label{thm:nonmonchar} The class of properties $\{\calf_{(C_{k},\Sigma)}:k\geq3,\Sigma\neq0^{k},\Sigma\neq1^{k}\}$
is only finitely large. \end{theorem}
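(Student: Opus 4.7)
The plan is to exhibit a finite list of canonical properties and show each $\calf_{(C_{k},\Sigma)}$ in the class coincides with one of them; since the list is finite, so is the class.

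The main tool is a \emph{shortening lemma}: if $\Sigma\in\cube^{k}$ has two coordinates $i\ne j$ with $\Sigma_{i}=\Sigma_{j}=a$, and $f$ attains the value $a$ at some point $z$, then any $(C_{k-2},\Sigma')$-containment of $f$ lifts to a $(C_{k},\Sigma)$-containment, where $\Sigma'$ is $\Sigma$ with coordinates $i,j$ dropped. The lift is immediate: given $y_{1},\ldots,y_{k-2}$ summing to $0$ and realizing a $(C_{k-2},\Sigma')$-containment, I augment with $x_{i}=x_{j}=z$; the identity $z+z=0$ in $\cube^{n}$ preserves the sum condition, and the values $f(z)=a$ match $\Sigma_{i}=\Sigma_{j}$. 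Taking contrapositives, if $f\in\calf_{(C_{k},\Sigma)}$ and $f$ attains $a$, then $f\in\calf_{(C_{k-2},\Sigma')}$.

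Iterating the shortening lemma, every $\calf_{(C_{k},\Sigma)}$ is contained in the intersection of properties $\calf_{(C_{k'},\Sigma')}$ for some small $k'\le 6$. I would then partition the class by the parity pair $(a\bmod 2,\,b\bmod 2)$, where $a,b$ are the counts of $0$'s and $1$'s in $\Sigma$, and further distinguish $\min(a,b)=1$ from $\min(a,b)\ge 3$. This yields finitely many combinatorial types, and to each type I would attach a specific candidate ``base'' property from a short explicit list: $f$ is constant; the preimage $f^{-1}(0)$ or $f^{-1}(1)$ is a subgroup of $\cube^{n}$; $f^{-1}(0)$ or $f^{-1}(1)$ is an affine coset; or $f$ is the indicator of a codimension-$1$ affine subspace.

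The hard part will be verifying the reverse inclusion---that each candidate base property actually implies $\calf_{(C_{k},\Sigma)}$. The converse of the shortening lemma fails in general: an all-distinct $(C_{k},\Sigma)$-violation cannot always be compressed to a $(C_{k-2},\Sigma')$-violation. I would therefore verify each implication directly from the algebra of cosets. For example, if $f=\mathbf{1}_{H}$ for an index-$2$ subgroup $H\le\cube^{n}$ and $\Sigma$ has $a$ odd zeros and $b$ even ones, then in any would-be violation $x_{1},\ldots,x_{k}$ the $a$ zero-valued points lie in the coset $H^{c}$ and (since $a$ is odd) sum to an element of $H^{c}$, while the $b$ one-valued points lie in $H$ and sum to an element of $H$, forcing $\sum_{\ell}x_{\ell}\in H^{c}$. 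Since $0\notin H^{c}$, this contradicts the required $\sum_{\ell}x_{\ell}=0$. Parallel coset-sum computations would handle each of the other base properties. Combining the forward shortening with these algebraic verifications identifies $\calf_{(C_{k},\Sigma)}$ with one of the finitely many canonical base properties, proving the theorem.
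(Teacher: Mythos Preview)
Your approach is essentially the same as the paper's: your shortening lemma is exactly the paper's containment observation ($(C_{k+2},\Sigma\circ aa)$-free $\subseteq (C_{k},\Sigma)$-free), your parity-based case split mirrors the paper's classification by the parities of $Z(\Sigma)$ and $O(\Sigma)$ together with whether either count equals $1$, and your coset-sum verification of the reverse inclusion is how the paper checks that each candidate family is contained in $\calf_{(C_{k},\Sigma)}$. The paper carries the program out fully, explicitly naming nine canonical families ($\const$, $\mlin$, $\blin$, $\aff$, $\baff$, $\flin$, $\bflin$, $\faff$, $\bfaff$) and, in addition to the two ingredients you list, proving small base-case characterizations such as $(C_{3},110)$-free $=\flin$ and $(C_{4},1110)$-free $=\faff$; you will need analogous direct arguments for these anchors, since shortening alone only gives inclusions, not the identification of the minimal cases.
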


The goal of Theorem \ref{thm:nonmontest} is not to introduce new
testable properties but rather to illustrate possible techniques for
analyzing local tests that may lead to more classes of testable non-monotone
properties.

\subsection{Overview of proofs}

We now give an outline of the proofs of our main theorems (Theorems~\ref{thm:main1}~and~\ref{thm:main2}),
and also the extensions (Theorems~\ref{thm:nonmontest}~and~\ref{thm:nonmonchar}).

Our claim in Theorem \ref{thm:main1}, that graphic matroid freeness
properties are locally testable, is based on analyzing the structure
of dependencies among elements of a graphic matroid. To this end,
we first recall the classification of linear forms due to Green and
Tao in \cite{GreenTao:LinearPrimes}. We require a minor reformulation
of their definition since, for us, the structure of the linear constraints
is described by elements of a matroid.

\begin{defn} Given a binary matroid $\calm$ represented by $v_{1},\dots,v_{k}\in\cube^{k}$,
we say that $\calm$ has {\em complexity $c$ at coordinate $i$}
if we can partition $\{v_{j}\}_{j\in[k]\backslash\{i\}}$ into $c+1$
classes such that $v_{i}$ is not in the span of any of the classes.
We say that ${\cal {M}}$ has {\em complexity $c$} if $c$ is
the minimum such that $\calm$ has complexity $c$ at coordinate $i$
for all $i\in[k]$. \end{defn}

The above definition makes sense because the span of a set of elements
is not dependent on the specific basis chosen to represent the matroid.
As a motivating example, consider the graphic matroid of $C_{k}$
studied by Green \cite{Gre03}. It can be represented by $v_{1}={e}_{1},v_{2}={e}_{2},\dots,v_{k-1}={e}_{k-1}$
and $v_{k}={e}_{1}+\cdots+{e}_{k-1}$. We see then that the graphic
matroid of $C_{k}$ has complexity $1$ because for every $i<k$,
the rest of the matroid elements can be partitioned into two sets
$\{{e}_{j}\}_{j\neq i}$ and $\left\{ \sum_{j\in[k]}{e}_{j}\right\} $
such that $v_{i}$ is not contained in the span of either set, and
for $i=k$, any nontrivial partition of the remaining elements ensures
that $v_{k}$ does not lie in the span of either partition. In Section
\ref{sec:Graphic-is-1}, we extend this observation about $C_{k}$
to all graphs. \begin{lem}\label{lem:graphicisone} For all graphs
$G$, the graphic matroid of $G$ has complexity $1$. \end{lem}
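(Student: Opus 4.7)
The plan is to translate the complexity-one condition into a purely graph-theoretic statement about connectivity, and then exhibit the required partition explicitly. Concretely, realize the graphic matroid of $G=(V,E)$ over $\mathbb{F}_2$ by assigning to each edge $e=\{u,v\}$ the vector $v_e = \mathbf{1}_u + \mathbf{1}_v \in \mathbb{F}_2^V$. For any $S \subseteq E$ and any $T \subseteq S$, we have $\sum_{e \in T} v_e = \sum_{w \in V}(\deg_T(w) \bmod 2)\,\mathbf{1}_w$. Hence $v_{e_i}$ lies in the span of $\{v_e : e \in S\}$ iff some $T \subseteq S$ yields a subgraph whose only odd-degree vertices are the two endpoints of $e_i$; by the standard handshake-type argument this is equivalent to $S$ containing a path in $G$ between the endpoints of $e_i$.

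Given this translation, to establish complexity at most $1$ at coordinate $i$ it suffices to partition $E \setminus \{e_i\}$ into two classes $E_1, E_2$ such that neither $(V,E_1)$ nor $(V,E_2)$ contains a path between the endpoints $u, v$ of $e_i$. The partition I would use is forced by its simplicity: let $E_1$ be the set of all edges of $G$ other than $e_i$ that are incident to $u$, and let $E_2$ be the set of all edges of $G$ that are not incident to $u$. In $(V, E_1)$ every edge contains $u$, so the only nontrivial connected component is $\{u\} \cup N_{E_1}(u)$; since $G$ is simple, the only edge that could join $u$ to $v$ is $e_i$ itself, which has been excluded, so $v$ is isolated in $(V, E_1)$ and no $u$-to-$v$ path exists. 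In $(V, E_2)$ the vertex $u$ is incident to no edges and is therefore isolated, so again no $u$-to-$v$ path exists. Applying the translation of the first paragraph, $v_{e_i}$ lies in the span of neither $E_1$ nor $E_2$, so the matroid has complexity at most $1$ at coordinate $i$, and since $i$ was arbitrary this proves the lemma.

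The only real obstacle is setting up the correct dictionary between $\mathbb{F}_2$-spans of edge-vectors and connectivity in edge-subgraphs; once this dictionary is in place, the partition into ``edges at $u$'' versus ``edges away from $u$'' is natural, and the hypothesis that $G$ is simple is exactly what is needed to prevent a parallel edge from joining $u$ and $v$ inside $E_1$. No appeal to ear-decompositions, block structure, or any other nontrivial graph theory is required.
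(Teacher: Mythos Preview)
Your proof is correct and uses essentially the same argument as the paper: partition the remaining edges into those incident to one fixed endpoint of $e_i$ and those not incident to it, and observe (using that $G$ is simple) that neither class can complete a cycle through $e_i$. The only difference is cosmetic---you spell out the $\mathbb{F}_2$ edge-vector representation and translate ``$v_{e_i}$ is in the span of $S$'' into ``$S$ contains a $u$--$v$ path,'' whereas the paper invokes the cycle characterization of dependence in a graphic matroid directly---but the partition and the underlying reasoning are identical.
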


Green and Tao \cite{GreenTao:LinearPrimes} showed that if a matroid
$\mathcal{M}$ has complexity $c$ and if $A$ is a subset of $\cube^{n}$,
then the number of linear maps $L:\cube^{k}\to\cube^{n}$ such that
$L(v_{i})\in A$ for all $i\in[k]$ is controlled by the $(c+1)$--th
Gowers uniformity norm of $A$. Previously, Green proved \cite{Gre03}
an arithmetic regularity lemma, which essentially states that any
set $A\subseteq\cube^{n}$ can be partitioned into subsets of affine
subspaces such that nearly every partition is nearly uniform with
respect to linear tests. We show in Section \ref{sec:Matroids-testable}
how to combine these two results to obtain the following:

\begin{lem}\label{lem:oneistestable} Given any binary matroid $\calm$
represented by $v_{1},\dots,v_{k}\in\cube^{k}$, if $\calm$ has complexity
$1$, then there exists a function $\tau:\mathbb{R}^{+}\rightarrow\mathbb{R}^{+}$
and a $k$-query tester that accepts members of $\calf_{\calm}$ with
probability $1$ and rejects $f$ that are $\eps$-far from $\calf_{\calm}$
with probability at least $\tau(\eps)$. \end{lem}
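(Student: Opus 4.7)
The natural $k$-query tester picks a uniformly random linear map $L:\cube^k\to\cube^n$ (equivalently, samples $L(e_1),\ldots,L(e_k)\in\cube^n$ independently and uniformly) and rejects iff $\langle f(L(v_1)),\ldots,f(L(v_k))\rangle = 1^k$. Completeness (accepting any $f\in\calf_\calm$ with probability $1$) is immediate from Definition~\ref{def:matfree}. The content is soundness, which I would prove in its contrapositive form via the standard ``counting + removal'' template: if the $\calm$-count
\[
\Lambda_\calm(f) \;:=\; \Exp_L \prod_{i=1}^{k} f(L(v_i))
\]
is at most $\tau(\eps)$, then $f$ agrees with some $g\in\calf_\calm$ on all but $\eps\cdot 2^n$ inputs, which contradicts $\eps$-farness.

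To execute this, I would first apply Green's arithmetic regularity lemma \cite{Gre03} to the indicator of $A:=f^{-1}(1)$, producing a subspace $H\le\cube^n$ of bounded codimension together with a Fourier decomposition $1_A = f_{\mathrm{str}} + f_{\mathrm{sml}} + f_{\mathrm{unf}}$ relative to cosets of $H$: $f_{\mathrm{str}}$ is constant on each coset (recording densities $\mu_C$), $f_{\mathrm{sml}}$ has small $L^2$-norm, and $f_{\mathrm{unf}}$ has negligible $U^2$ (largest Fourier coefficient) norm on every coset. Next I would invoke the Generalized von Neumann inequality of Green--Tao~\cite{GreenTao:LinearPrimes}: since by hypothesis $\calm$ has complexity $1$ at every coordinate, the multilinear form $\Lambda_\calm(g_1,\ldots,g_k)$ is controlled, in each coordinate $i$, by the $U^2$-norm of the corresponding $g_i$. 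Expanding $\Lambda_\calm(1_A,\ldots,1_A)$ through the decomposition and using multilinearity, every term containing an $f_{\mathrm{unf}}$ factor is negligible, so $\Lambda_\calm(f)$ is well-approximated by a weighted sum, over admissible coset-tuples $(C_1,\ldots,C_k)$ (those possessing a common $L$ with $L(v_i)\in C_i$), of $\prod_i \mu_{C_i}$ times the fraction of admissible $L$'s, plus an $f_{\mathrm{sml}}$ error.

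For the cleanup, if this approximated count is at most $\tau(\eps)$, then for every admissible coset-tuple at least one density $\mu_{C_i}$ must lie below a threshold $\delta=\delta(\eps)$. I would then define $g$ by zeroing out $f$ on every coset whose density falls below $\delta$ and on the small-measure set of cosets where $f_{\mathrm{sml}}$ or $f_{\mathrm{unf}}$ is anomalous; by construction no admissible coset-tuple of $g$ is fully $1$-dense, hence $g\in\calf_\calm$, and $\delta(f,g)\le\eps$ provided the regularity parameters are tuned appropriately. The main obstacle, and the source of the tower-type bound in Theorem~\ref{thm:main1}, is precisely this parameter-chasing: the $U^2$-error in the regularity step must be driven well below $\delta^k$ so that per-coset sparsity is detectable in the counting step, while $\delta$ itself must shrink with $\eps$ to keep the cleanup $\eps$-cheap; feeding these demands back into Green's lemma forces $H$ to have codimension of tower type in $1/\eps$. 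A secondary subtlety is that ``admissibility'' on the quotient $\cube^n/H$ is itself an $\calm$-count at the coarser scale, so one must apply the complexity-$1$ counting estimate at both scales consistently in order for the admissible-tuple mass to factor out cleanly in the cleanup.
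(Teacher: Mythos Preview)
Your proposal uses the same two ingredients as the paper---Green's arithmetic regularity lemma together with the complexity-$1$ generalized von Neumann bound---and would go through, but you have organized the argument as a removal lemma (contrapositive: small $\Lambda_\calm(f)\Rightarrow$ $\eps$-close to $\calm$-free), whereas the paper argues the direct implication and thereby avoids almost all of your bookkeeping. Concretely, the paper applies Green's lemma to obtain $H$, then simply \emph{rounds} $f$ to a function $f^R$ by zeroing out $f$ on every non-uniform coset and on every uniform coset of density below $b(\eps)$. Since at most $a(\eps)+b(\eps)<\eps$ mass was changed and $f$ is $\eps$-far, $f^R$ still contains $\calm$ at some linear map $L$; this single $L$ pinpoints a $k$-tuple of cosets that are \emph{all} $a(\eps)$-uniform and have density $\ge b(\eps)$. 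One now restricts to linear maps of the form $L+\phi$ with $\phi:\cube^k\to H$, expands each $f_{L(v_i)+H}$ as $\Exp f_i+(f_i-\Exp f_i)$, and applies Proposition~\ref{prop:von Neumann} on $H$ to the $2^k-1$ non-constant terms; this immediately gives a rejection probability of at least $2^{-k\cdot\mathrm{codim}(H)}\bigl(b(\eps)^k-(2^k-1)a(\eps)^{1/2}\bigr)$. Compared with your outline, this buys three simplifications: (i) there is no need for the three-term decomposition $f_{\mathrm{str}}+f_{\mathrm{sml}}+f_{\mathrm{unf}}$---the weaker statement ``most cosets are uniform'' suffices; (ii) there is no sum over all admissible coset-tuples and hence no need to argue that smallness of the global count forces every tuple to have a sparse coset; and (iii) your ``secondary subtlety'' about a coarse-scale $\calm$-count on $\cube^n/H$ disappears entirely, since only one tuple is ever considered. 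Your route is the standard removal-lemma packaging and is perfectly valid, but the paper's direct argument is shorter and makes the source of the tower bound (the factor $2^{-k\cdot\mathrm{codim}(H)}$) completely transparent.
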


Theorem \ref{thm:main1} directly follows from combining Lemma \ref{lem:graphicisone}
and Lemma \ref{lem:oneistestable}. In fact, Lemma \ref{lem:oneistestable}
implies testability of all matroids that have complexity one, not
only those that are graphic. In Section \ref{sec:Graphic-is-1}, we
give examples of binary matroids that have complexity $1$ and yet
are provably not graphic.

Theorem \ref{thm:main2} provides a proper hierarchy among the graphical
properties. Moreover, the containments $\mathcal{P}_{1}\subsetneq\mathcal{P}_{2}$
in this hierarchy are shown to be {}``statistically proper'' in
the sense that we demonstrate functions $f$ that are $\eps$-far
from $\mathcal{P}_{1}$ but are in $\mathcal{P}_{2}$. The theorem
implies the following hierarchy: \[
\cdots\subsetneq C_{k+2}\text{-free}\subsetneq C_{k}\text{-free}\subsetneq\cdots\subsetneq C_{3}\text{-free}=K_{3}\text{-free}\subsetneq\cdots\subsetneq K_{k}\text{-free}\subsetneq K_{\binom{k}{2}+2}\text{-free}\subsetneq\cdots\]
 Thus, the class of properties $\calf_{G}$ does indeed contain infinitely
many more properties than the cycle freeness properties considered
by Green \cite{Gre03}.

Both the hierarchy among the cyclic freeness properties and among
the clique freeness properties are derived in Section \ref{sec:Infinitely-properties}
using a general technique. In order to show a statistically proper
containment $\calm_{1}$-free $\subsetneq\calm_{2}$-free, we construct
a function $f$ that, by its definition, contains $\mathcal{M}_{1}$
at a large number of linear maps and so is far from being $\mathcal{M}_{1}$-free.
On the other hand, the construction ensures that if $f$ is also not
$\mathcal{M}_{2}$-free, then there is a {\em matroid homomorphism}
from $\calm_{2}$ to $\calm_{1}$. We define a matroid homomorphism
from a binary matroid $\calm_{2}$ to a binary matroid $\calm_{1}$
to be a map from the ground set of $\calm_{2}$ to the ground set
of $\calm_{1}$ which maps cycles to cycles. The separation between
$\mathcal{M}_{2}$-freeness and $\mathcal{M}_{1}$-freeness is then
obtained by proving that there do not exist any matroid homomorphisms
from $\calm_{2}$ to $\calm_{1}$. This proof framework suffices for
both the claims in Theorem \ref{thm:main2} and is reminiscent of
proof techniques involving graph homomorphisms in the area of graph
property testing (see \cite{AlonShapiraSurvey} for a survey).

Theorem \ref{thm:nonmontest} is the result of a more involved application
of the regularity lemma. To deal with non-monotone properties, we
employ a different {}``rounding'' scheme inspired by the testability
of non-monotone graph properties in \cite{AFNS}. Unlike Szemerédi's
regularity lemma, a {}``strong form'' of the arithmetic regularity
lemma is not known, so we restrict our attention to cyclic matroids
and exploit the additive structure of the pattern. Theorem \ref{thm:nonmonchar}
is based on a characterization theorem in Appendix \ref{sub:Characterization}
that classifies $(C_{k},\Sigma)$-freeness properties into $9$ classes
when $\Sigma\neq0^{k},1^{k}$.

\section{Freeness of complexity $1$ matroids is testable\label{sec:Matroids-testable}}

In this section we prove Lemma \ref{lem:oneistestable}. Before doing
so, we fix our notation and provide a quick background on Fourier
analysis. If $H$ is a subgroup of $G$, the cosets of $H$ are indicated
by $g+H$, with $g$ in $G$. Let $f_{g+H}:H\rightarrow\cube$ denote
$f$ restricted to the coset $g+H$, defined by sending $h$ to $f(g+h)$;
that is, for every $h\in H,g\in G$, $f_{g+H}(h):=f(g+h)$. For $\sigma\in\cube$,
we define $\mu_{\sigma}(f_{g+H}):=\Pr_{h\in H}[f_{g+H}(h)=\sigma]$
to be the density of $\sigma$ in $f$ restricted to coset $g+H$.

\subsection{Fourier analysis and Green's regularity lemma\label{sub:Fourier-Analysis}}

\begin{defn}[Fourier transform] If $f:\cube^{n}\rightarrow\cube$,
then we define its Fourier transform $\widehat{f}:\cube^{n}\rightarrow\mathbb{R}$
to be $\widehat{f}(\alpha)=\Exp_{x\in\cube^{n}}f(x)\chi_{\alpha}(x)$,
where $\chi_{\alpha}(x)=(-1)^{\sum_{i\in[n]}\alpha_{i}x_{i}}$. $\widehat{f}(\alpha)$
is called the \emph{Fourier coefficient} of $f$ at $\alpha$, and
the $\{\chi_{\alpha}\}_{\alpha}$ are the characters of $\cube^{n}$.
\end{defn} 

It is easy to see that for $\alpha,\beta\in\cube^{n}$, $\Exp\chi_{\alpha}\cdot\chi_{\beta}$
is 1 if $\alpha=\beta$ and $0$ otherwise. Since there are $2^{n}$
characters, the characters form an orthonormal basis for functions
on $\cube^{n}$, and we have the Fourier inversion formula \[
f(x)=\sum_{\alpha\in\cube^{n}}\widehat{f}(\alpha)\chi_{\alpha}(x)\]
 and Parseval's Identity \[
\sum_{\alpha\in\cube^{n}}\widehat{f}(\alpha)^{2}=\Exp_{x}\left[f(x)^{2}\right].\]

Next we turn to Green's arithmetic regularity lemma, the crux of the
analysis of our local testing algorithm. Green's regularity lemma
over $\cube^{n}$ is a structural theorem for Boolean functions. It
asserts that for every Boolean function, there is some decomposition
of the Hamming cube into cosets, such that the function restricted
to most of these cosets are uniform and pseudorandom with respect
to the linear functions. An alternate and equivalent way is that no
matter where we cut the Boolean cube by a hyperplane, the densities
of $f$ on the two halves of the cube separated by the hyperplane
do not differ greatly. Formally, we say that a function is uniform
if all of its nonzero Fourier coefficients are small.

\begin{defn}[Uniformity] For every $0<\eps<1$, we say that a function
$f:\ \cube^{n}\rightarrow\cube$ is \emph{$\eps$-uniform} if for
every $\alpha\neq0\in\cube^{n},\left|\widehat{f}(\alpha)\right|\leq\eps$.
\end{defn}

Recall that we let $W(t)$ denote a tower of twos with height $\left\lceil t\right\rceil $.
To obtain a partition of the Hamming cube that satisfies the required
uniformity requirement, the number of cosets in the partition may
be rather large. More precisely,

\begin{lem}[Green's Regularity Lemma]\label{lemma:Green's regularity}Let
$f:\ \cube^{n}\rightarrow\cube$. For every $0<\eps<1$, there exists
a subspace $H$ of $G=\cube^{n}$ of co-dimension at most $W(\eps^{-3})$,
such that $\Pr_{g\in G}\left[f_{g+H}\text{ is }\eps\text{-uniform}\right]\geq1-\eps$.
\end{lem}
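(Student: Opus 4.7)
The plan is to prove the regularity lemma by an energy-increment argument, in direct analogy with the proof of Szemer\'edi's regularity lemma but using Fourier analysis on $\cube^n$ in place of combinatorial counting. I would define the \emph{energy} of a subgroup $H\le G=\cube^n$ by
$$
E(H) \;=\; \Exp_{g\in G}\bigl[\mu_{1}(f_{g+H})^{2}\bigr] \;=\; \Exp_{g\in G}\bigl[\widehat{f_{g+H}}(0)^{2}\bigr],
$$
which lies in $[0,1]$ because $f$ is Boolean. Starting from $H_{0}=G$, I iteratively produce a chain $H_{0}\supsetneq H_{1}\supsetneq\cdots$: at step $i$, if at least an $\eps$ fraction of the cosets $g+H_{i}$ fail to be $\eps$-uniform, I refine $H_{i}$; otherwise I halt and output $H_{i}$.

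For the refinement step, each ``bad'' coset $g+H_{i}$ has, by failure of $\eps$-uniformity, some nonzero character $\alpha_{g}\in\widehat{H_{i}}$ with $|\widehat{f_{g+H_{i}}}(\alpha_{g})|>\eps$. I then set
$$
H_{i+1} \;=\; H_{i}\cap\bigcap_{g\text{ bad}}\ker(\chi_{\alpha_{g}}).
$$
The key identity, obtained from the law of total variance together with the fact that the mean of $\mu_{1}(f_{g'+H'})$ over $g'\in g+H$ equals $\mu_{1}(f_{g+H})$, is that for any $H'\le H$,
$$
E(H')-E(H) \;=\; \Exp_{g\in G}\Bigl[\operatorname{Var}_{g'\in g+H}\mu_{1}(f_{g'+H'})\Bigr]\;\ge\;0.
$$
A direct Fourier computation shows that cutting a bad coset $g+H_{i}$ by $\ker(\chi_{\alpha_{g}})$ splits it into two halves whose densities differ by $2|\widehat{f_{g+H_{i}}}(\alpha_{g})|>2\eps$, so the variance contribution from that coset is at least $\eps^{2}$. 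Since the bad cosets occupy at least an $\eps$ fraction of $G$, the energy gain at each round is at least $\eps\cdot\eps^{2}=\eps^{3}$.

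Because $E(H)\in[0,1]$, this procedure must terminate within $\eps^{-3}$ rounds, and at termination the final $H$ is precisely what the lemma asks for. The main obstacle, and the sole source of the tower-type bound, is controlling the growth of $\operatorname{codim}(H_{i})$: at step $i$ there can be up to $[G:H_{i}]=2^{\operatorname{codim}(H_{i})}$ bad cosets, each contributing (at most) one new character to the quotient, yielding the recurrence
$$
\operatorname{codim}(H_{i+1})\;\le\;\operatorname{codim}(H_{i})+2^{\operatorname{codim}(H_{i})}.
$$
Unrolling this recurrence for $\eps^{-3}$ iterations produces a tower of twos of height $O(\eps^{-3})$, matching the stated $W(\eps^{-3})$ bound. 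The remaining pieces --- verifying the variance identity, extracting witnessing characters on bad cosets, and counting rounds --- are essentially mechanical; the delicate point is designing the refinement rule so as to balance the competing requirements of large per-round energy gain against manageable codimension blow-up.
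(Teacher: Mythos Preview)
The paper does not supply a proof of this lemma; it is quoted from Green~\cite{Gre03} and used as a black box. Your energy-increment argument is correct and is exactly Green's original proof: the energy $E(H)=\Exp_{g}\mu_{1}(f_{g+H})^{2}$ lies in $[0,1]$, each refinement step (adding one witnessing character per bad coset) raises it by at least $\eps\cdot\eps^{2}=\eps^{3}$ via the variance identity you state, so the process halts within $\eps^{-3}$ rounds, and the recurrence $\operatorname{codim}(H_{i+1})\le\operatorname{codim}(H_{i})+2^{\operatorname{codim}(H_{i})}$ unwinds to the tower bound $W(\eps^{-3})$.

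One small remark: your closing sentence about ``balancing the competing requirements of large per-round energy gain against manageable codimension blow-up'' overstates the subtlety---there is no trade-off to tune here. Adding exactly one character per bad coset is the canonical move; it simultaneously guarantees the $\eps^{3}$ increment (since further refinement only increases variance) and the tower-type codimension growth, and nothing cleverer is needed or possible within this framework.
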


\subsection{Testability of complexity $1$ matroid freeness}

The proposition below is proved in \cite{GreenTao:LinearPrimes}.
Collectively, statements capturing the phenomenon that expectation
over certain forms are controlled by varying degrees of the Gowers
norm are termed \emph{generalized von-Neumann type Theorems} in the
additive combinatorics literature. In particular, as we only require
the degree $2$ Gowers norm of a function, which is equivalent to
the $\ell_{4}$ norm of the function's Fourier transform. The version
we state here requires the functions $f_{i}$ to be over $\cube^{n}$
and possibly distinct; however as explained by Gowers and Wolf~\cite{GowersWolf},
both conditions can be easily satisfied. 

\begin{prop}[implicit in \cite{GreenTao:LinearPrimes}]\label{prop:von Neumann}
Suppose a binary matroid $\calm=\{v_{1}\ldots,v_{k}\}$ has complexity
$1$ and let $f_{1},\ldots,f_{k}:\cube^{n}\rightarrow\cube$. Then
\[
\Exp_{L:\cube^{k}\rightarrow\cube^{n}}\thinspace\left[\prod_{i=1}^{k}\thinspace f_{i}(L(v_{i}))\right]\leq\min_{i\in[k]}\thinspace\left(\sum_{\alpha\in\cube^{n}}\widehat{f}_{i}(\alpha)^{4}\right)^{1/4}.\]
 \end{prop}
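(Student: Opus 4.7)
My plan is to prove this generalized von Neumann bound by a standard double Cauchy--Schwarz argument, reducing the $k$-fold correlation to an expression controlled by the degree-$2$ Gowers uniformity norm of a single $f_i$, and then invoking the well-known Fourier identity $\|f_i\|_{U^2}^4 = \sum_\alpha \widehat{f_i}(\alpha)^4$ (which follows by expanding the defining average using the inversion formula and applying character orthogonality). Since the claim is symmetric in $i$, it suffices to derive the bound with $f_1$ in place of the minimum.

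The setup is as follows. Complexity $1$ at coordinate $1$ furnishes a partition $\{v_2, \dots, v_k\} = A \sqcup B$ with $v_1 \notin \operatorname{span}(A)$ and $v_1 \notin \operatorname{span}(B)$. Dually there are functionals $\phi_A, \phi_B$ on $\cube^k$ with $\phi_A(v_1) = \phi_B(v_1) = 1$, $\phi_A|_A = 0$, $\phi_B|_B = 0$. Assume first that $\phi_A$ and $\phi_B$ may be chosen linearly independent; perform a change of basis on the domain of $L$ (which preserves the uniform distribution on linear maps) so that $\phi_A, \phi_B$ become the first two coordinate functionals. Parameterizing $L$ by the i.i.d.\ uniform vectors $x_j = L(e_j) \in \cube^n$, we obtain $L(v_1) = x_1 + x_2 + c$ with $c = c(x_3, \dots, x_k)$; every $L(v_j)$ with $v_j \in A$ depends only on $(x_2, x_3, \dots, x_k)$; and every $L(v_j)$ with $v_j \in B$ depends only on $(x_1, x_3, \dots, x_k)$. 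Collecting the factors yields
\[
T := \Exp_L \prod_i f_i(L(v_i)) = \Exp_{x_1, \dots, x_k}\, f_1(x_1 + x_2 + c)\, F_A(x_2, x_3, \dots, x_k)\, F_B(x_1, x_3, \dots, x_k),
\]
where $F_A, F_B$ are products of $f_j$-factors, so $|F_A|, |F_B| \le 1$.

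To extract the Gowers norm, I freeze $x_3, \dots, x_k$ (so $c$ is a constant, $F_A$ a function of $x_2$ only, $F_B$ of $x_1$ only) and absorb $c$ into a shift $g(z) = f_1(z+c)$, noting $|\widehat{g}| = |\widehat{f_1}|$. A first Cauchy--Schwarz in $x_2$ (using $|F_A| \le 1$) removes $F_A$ and introduces a second copy $x_1'$ of $x_1$. Substituting $h = x_1 + x_1'$ and $y = x_1 + x_2$ turns $(x_1, x_1', x_2)$ into independent uniform $(x_1, h, y)$, under which the integrand factors into a $(y, h)$-part $g(y)g(y+h)$ and an $(x_1, h)$-part $F_B(x_1) F_B(x_1+h)$. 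A second Cauchy--Schwarz in $h$ then splits the two parts; after the standard change of variables, the $g$-part becomes $\|g\|_{U^2}^4 = \sum_\alpha \widehat{f_1}(\alpha)^4$ and the $F_B$-part becomes $\|F_B\|_{U^2}^4 \le 1$. Combining gives $|T_{\mathrm{fix}}| \le \bigl(\sum_\alpha \widehat{f_1}(\alpha)^4\bigr)^{1/4}$; averaging over $x_3, \dots, x_k$ preserves the bound.

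The step that requires the most care is the bookkeeping for the two Cauchy--Schwarz applications and verifying that the linear substitution $(x_1, x_1', x_2) \mapsto (x_1, h, y)$ genuinely decouples the $g$-factor from the $F_B$-factor; this decoupling hinges on $F_A$ being free of $x_1$ and $F_B$ being free of $x_2$, which is exactly what the two independent directions $\phi_A, \phi_B$ of complexity $1$ afford. The only remaining subtlety is the degenerate case where $\phi_A$ and $\phi_B$ are forced to coincide; this forces the common functional $\phi$ to vanish on $A \cup B = \{v_2, \dots, v_k\}$ while mapping $v_1$ to $1$, i.e., $v_1 \notin \operatorname{span}\{v_2, \dots, v_k\}$. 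In that case $L(v_1)$ is uniform on $\cube^n$ and independent of $(L(v_2), \dots, L(v_k))$, so $T = \widehat{f_1}(0) \cdot \Exp\bigl[\prod_{j \ge 2} f_j(L(v_j))\bigr]$ and the bound follows from $|\widehat{f_1}(0)|^4 \le \sum_\alpha \widehat{f_1}(\alpha)^4$.
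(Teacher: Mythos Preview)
The paper does not actually prove Proposition~\ref{prop:von Neumann}; it attributes the result to Green and Tao~\cite{GreenTao:LinearPrimes} (with the remark that Gowers and Wolf explain how to handle distinct $f_i$ over $\cube^{n}$) and simply quotes the conclusion. So there is no in-paper argument to compare against.

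Your argument is correct and is essentially the standard Green--Tao proof: use complexity~$1$ at coordinate~$i$ to separate the remaining factors into a block not depending on one ``free'' variable and a block not depending on another, then apply Cauchy--Schwarz twice to reduce to $\|f_i\|_{U^2}^4=\sum_\alpha \widehat{f_i}(\alpha)^4$. The change of basis via the dual functionals $\phi_A,\phi_B$ is the right way to make the two free directions explicit, and over $\mathbb{F}_2$ ``linearly independent'' for two nonzero functionals is the same as ``distinct'', so your degenerate-case analysis is exactly the situation where the unique admissible $\phi_A$ coincides with the unique admissible $\phi_B$; your conclusion that then $v_1\notin\operatorname{span}\{v_2,\ldots,v_k\}$ and the trivial bound via independence of $L(v_1)$ are both fine. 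The bookkeeping in the two Cauchy--Schwarz steps checks out: after the substitution $(x_1,x_1',x_2)\mapsto(x_1,h,y)$ the integrand factors as claimed, the second Cauchy--Schwarz in $h$ yields $\|g\|_{U^2}^4\cdot\|F_B\|_{U^2}^4$, and $\|F_B\|_{U^2}\le\|F_B\|_\infty\le 1$ because the $f_j$ are $\{0,1\}$-valued.
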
 It is an easy deduction from Proposition \ref{prop:von Neumann}
to see that if $f$ is uniform, then the number of linear maps $L$
where $f$ has a $\calm$-pattern is close to $\Exp[f]^{m}N^{d}$,
where $N=2^{n}$. Combining this observation with the regularity lemma,
we prove Lemma \ref{lem:oneistestable}.

\begin{proof}[Proof of Lemma \ref{lem:oneistestable}] Consider a
test that picks a linear map $L$ uniformly at random from all linear
maps from $\cube^{k}\rightarrow\cube^{n}$ and rejects iff for all
$i\in[k]$, $f(L(v_{i}))=1$. Clearly the test has completeness one.

Now we analyze the soundness of this test. Suppose $f$ is $\eps$-far
from being $\calm$-free. We want to show that the test rejects with
probability at least $\tau(\eps)$, such that $\tau(\eps)>0$ whenever
$\tau>0$. Let $a(\eps)$ and $b(\eps)$ be two functions of $\eps$
that satisfy the constraint $a(\eps)+b(\eps)<\eps$, we shall specify
these two functions at the end of the proof. We now apply Lemma~\ref{lemma:Green's regularity}
to $f$ to obtain a subspace $H$ of $G$ of co-dimension at most
$W(a(\eps)^{-3})$. Consequently, $f$ restricted to all but at most
$a(\eps)$ fraction of the cosets of $H$ are $a(\eps)$-uniform.
We define a reduced function $f^{R}:\cube^{n}\rightarrow\cube$ as
follows.

For each $g\in G$, if $f$ restricted to the coset $g+H$ is $a(\eps)$-uniform,
then define\[
f_{g+H}^{R}(x)=\begin{cases}
0 & \text{ if }\mu(f_{g+H})\leq b(\eps)\\
f_{g+H} & \mbox{ otherwise. }\end{cases}\]

Else, define $f_{g+H}^{R}=0$.

Note that at most $a(\eps)+b(\eps)$ fraction of modification has
been made to $f$ to obtain $f^{R}$. Since $f$ is $\eps$-far from
being $\calm$-free, $f^{R}$ has a $\calm$-pattern at some linear
map $L$. More precisely, for every $i\in[k]$, $f^{R}(L(v_{i}))=1$.
Now consider the cosets $L(v_{i})+H$. By our construction of $f^{R}$,
we know that $f$ restricted to each of these cosets is $a(\eps)$-uniform
and at least $b(\eps)$ dense. We will count the number of linear
maps $\phi:\cube^{k}\rightarrow H$ such that $f$ has a $\calm$
pattern at $L+\phi$. Notice that the probability the test rejects
is at least \[
2^{-k\cdot W(a(\eps)^{-3})}\thinspace\Pr_{\phi:\cube^{k}\rightarrow H}\left[\forall i,f_{L(v_{i})+H}(\phi(v_{i}))=1\right].\]
 To lower-bound this rejection probability, it suffices to show that
the probability \[
\Pr_{\phi:\cube^{k}\rightarrow H}\left[\forall i,f_{L(v_{i})+H}(\phi(v_{i}))=1\right]\]
 is bounded below by at least some constant depending on $\eps.$
To this end, we rewrite this probability as \begin{equation}
\Exp_{\phi:\cube^{k}\rightarrow H}\left[\prod_{i\in[k]}f_{i}(\phi(v_{i}))\right],\label{eq:reject expectation}\end{equation}
 where $f_{i}=f_{L(v_{i})+H}$. By replacing each function $f_{i}$
by $\Exp f_{i}+(f_{i}-\Exp f_{i})$, it is easy to see that the above
expression can be expanded into a sum of $2^{k}$ terms, one of which
is $\prod_{i\in[k]}\Exp f_{i}$, which is at least $b(\eps)^{k}$.
For the other $2^{k}-1$ terms, by applying Proposition~\ref{prop:von Neumann}
and using Parseval's Identity, each of these terms is bounded above
by $a(\eps)^{1/2}$. So Equation~\ref{eq:reject expectation} is
at least $b(\eps)^{k}-(2^{k}-1)a(\eps)^{1/2}$. To finish the analysis,
we need to specify $a(\eps),b(\eps)$ such that $b(\eps)^{k}-(2^{k}-1)a(\eps)^{1/2}>0$
and $a(\eps)+b(\eps)<\eps$. Both are satisfied by setting $a(\eps)=(\frac{\eps}{4})^{2k},\, b(\eps)=\frac{\eps}{2}$.
Thus, the rejection probability is at least $\tau(\eps)\geq2^{-k(\, W((4/\eps)^{6k})+2)}\cdot\eps^{k}$,
completing the proof. \end{proof}

\section{Graphic matroids have complexity $1$\label{sec:Graphic-is-1}}

Here we prove that graphic matroids have complexity 1. While the proof
is simple, we believe it sheds insight into the notion of complexity
and shows that even the class of 1-complexity matroids is quite rich.

\begin{proof}[Proof of Lemma \ref{lem:graphicisone}] Recall that
throughout we are assuming $G$ to be a simple graph. Fix an arbitrary
edge $e$ in $G$ with vertices $v_{1}$ and $v_{2}$ as its two ends.
We partition the remaining edges of $G$ into two sets $S_{1}$ and
$S_{2}$ such that, if an edge is incident to $v_{1}$ then it is
in $S_{1}$ and otherwise, it is in $S_{2}$. Because $G$ is simple,
a cycle in $G$ containing $e$ must include an edge (apart from edge
$e$) which is incident to $v_{1}$ and another edge (other than $e$)
which is not incident to $v_{1}$. Therefore $e$ is not in the span
of either $S_{1}$ or $S_{2}$. \end{proof}

As we have seen earlier, Lemma \ref{lem:oneistestable} holds for
any matroid of complexity $1$. Hence, it is a natural question to
ask whether there exist non-graphic matroids which have complexity
$1$. In Appendix~\ref{app:nongraphic} we show that such matroids
do exist. It is an open question to come up with a natural characterization
of matroids having complexity $1$.

\section{Infinitely many monotone properties\label{sec:Infinitely-properties}}

In this section we prove Theorem~\ref{thm:main2}, that there are
infinitely many matroids for which the property of being $\calm$-free
are pairwise very different.

To do so we consider a pair of target matroids $\calm_{1}$ and $\calm_{2}$.
Based on just the first matroid $\calm_{1}$, we create a canonical
function $f=f_{\calm_{1}}:\cube^{n}\to\cube$. We show, using a simple
analysis, that this canonical function is far from being $\calm_{1}$
free. We then show that if this function has an instance of $\calm_{2}$
inside, then there is a {}``homomorphism'' (in a sense we define
below) from $\calm_{2}$ to $\calm_{1}$. Finally we show two different
ways in which one can rule out homomorphisms between pairs of graphic
matroids; one based on the odd girth of the matroids, and the other
based on the maximum degree of $\calm_{1}$. Together these ideas
lead to proofs of distinguishability of many different matroids.

\begin{defn} Given a binary matroid $\calm$ represented by vectors
$v_{1},\ldots,v_{k}\in\cube^{k}$, and integer $n\geq k$, let the
canonical function $f=f_{\calm}:\cube^{n}\to\cube$ be given by $f(x,y)=1$
if $x\in\{v_{1},\ldots,v_{k}\}$ and $0$ otherwise; where $x\in\cube^{k}$
and $y\in\cube^{n-k}$. \end{defn}

\begin{claim} \label{claim:e-far}Let $\calm$ be a binary matroid
with $v_{i}\ne0$ for all $i\in\{1,\ldots,k\}$. Then $f_{\calm}$
is $\frac{1}{2^{k}}$--far from being $\calm$-free. \end{claim}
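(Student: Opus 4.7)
The plan is to exhibit many ``copies'' of $\calm$ already sitting inside $f_{\calm}$ and then use a double-counting argument: any $\calm$-free $g$ must destroy each copy, so the symmetric difference $D := \{x \in \cube^{n} : f_{\calm}(x) \ne g(x)\}$ must be large.

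First I would introduce the following canonical family of linear embeddings. For each linear map $B:\cube^{k} \to \cube^{n-k}$, define $L_{B}:\cube^{k} \to \cube^{n}$ by $L_{B}(v) = (v,Bv)$. This is linear, and by the definition of $f_{\calm}$ we have $f_{\calm}(L_{B}(v_{i})) = f_{\calm}(v_{i},Bv_{i}) = 1$ for every $i \in [k]$ and every $B$. So each of the $2^{k(n-k)}$ choices of $B$ yields a copy of $\calm$ inside the $1$-set of $f_{\calm}$.

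Next, assume $g$ is $\calm$-free. Then for every $B$ there exists an $i \in [k]$ with $g(L_{B}(v_{i})) = 0$, and since $f_{\calm}(L_{B}(v_{i})) = 1$ the point $L_{B}(v_{i})$ lies in $D$. Counting pairs $(B,i)$ with $L_{B}(v_{i}) \in D$, I get
\[
\#\{(B,i) : L_{B}(v_{i}) \in D\} \;\geq\; 2^{k(n-k)}.
\]
For the matching upper bound, I count how many pairs $(B,i)$ hit a fixed point $p = (x,y) \in \cube^{n}$. Such a pair requires $v_{i} = x$ (so $i$ is determined, treating the $v_{i}$'s as the distinct elements of the matroid), and then the condition $Bv_{i}=y$ imposes exactly $n-k$ linearly independent constraints on the $k(n-k)$ entries of $B$ --- this is where the hypothesis $v_{i} \ne 0$ is used, as it guarantees that the linear functional $B \mapsto Bv_{i}$ is surjective onto $\cube^{n-k}$. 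Hence each $p$ is hit by exactly $2^{(k-1)(n-k)}$ pairs.

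Combining the two bounds,
\[
|D| \cdot 2^{(k-1)(n-k)} \;\geq\; 2^{k(n-k)},
\]
so $|D| \geq 2^{n-k}$ and therefore $\delta(f_{\calm}, g) \geq 2^{n-k}/2^{n} = 1/2^{k}$. Taking the minimum over $\calm$-free $g$ gives the claim. There is no serious obstacle here: the only subtle point is Step~3, where I must verify that $v_{i} \ne 0$ makes the system $Bv_{i} = y$ exactly $(k-1)(n-k)$-dimensional, and that (by matroid-element distinctness) each point of $D$ corresponds to a unique index $i$ so no overcounting occurs.
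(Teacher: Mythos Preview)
Your proof is correct and follows essentially the same approach as the paper: both use the family of linear maps $L_{B}(v)=(v,Bv)$ over all linear $B:\cube^{k}\to\cube^{n-k}$ and exploit that $v_{i}\ne 0$ makes the fiber $\{B:Bv_{i}=y\}$ have size exactly $2^{(k-1)(n-k)}$. The only cosmetic difference is that the paper phrases the argument probabilistically (pick $B$ at random and apply a union bound over~$i$ to show any $g$ with $\delta(f,g)<2^{-k}$ still contains $\calm$), whereas you phrase the same computation as a deterministic double count on the contrapositive side.
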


\begin{proof} Note that if we consider the linear map $L:\cube^{k}\to\cube^{n}$
that sends $x$ to $\langle x,0\rangle$, then $f$ contains $\calm$
at $L$. So $f$ is not $\calm$-free. However we wish to show that
any function that is $2^{-k}$-close to $f$ contains $\calm$ somewhere.
Fix a function $g$ such that $\delta(f,g)=\delta<2^{-k}$. We will
show that $g$ contains $\calm$ somewhere.

For $i\in[k]$ let $\delta_{i}=\Pr_{y\in\cube^{n-k}}[f(v_{i},y)\ne g(v_{i},y)]$.
Note that $\sum_{i=1}^{k}\delta_{i}\leq2^{k}\cdot\delta<1$. Now consider
a random linear map $L_{1}:\cube^{k}\to\cube^{n-k}$, and its extension
$L:\cube^{k}\to\cube^{n}$ given by $L(x)=\langle x,L_{1}(x)\rangle$.
For every non-zero $x$ and in particular for $x\in\{v_{1},\ldots,v_{k}\}$,
we have $L_{1}(x)$ is distributed uniformly over $\cube^{n-k}$.
Thus, for any fixed $i\in[k]$, we have $\Pr_{L_{1}}[g(L(v_{i}))\ne1]\leq\delta_{i}$.
By the union bound, we get that $\Pr_{L_{1}}[\exists i{\rm {\, s.t.\,}}g(L(v_{i}))\ne1]\leq\sum_{i}\delta_{i}<1$.
In other words there exists a linear map $L_{1}$ (and thus $L$)
such that for every $i$, $g(L(v_{i}))=1$ and so $g$ contains $\calm$
at $L$. \end{proof}

We now introduce our notion of a {}``homomorphism'' between binary
matroids. (We stress that the phrase homomorphism is conjured up here
and we are not aware of either this notion, or the phrase being used
in the literature. We apologize for confusion if this phrase is used
to mean something else.)

\begin{defn} Let $\calm_{1}$ and $\calm_{2}$ be binary matroids
given by $v_{1},\ldots,v_{k}\in\cube^{k}$ and $w_{1},\ldots,w_{\ell}\in\cube^{\ell}$.
We say that $\calm_{2}$ has a homomorphism to $\calm_{1}$ if there
is a map $\phi:\{w_{1},\ldots,w_{\ell}\}\to\{v_{1},\ldots,v_{k}\}$
such that for every set $T\subseteq[\ell]$ such that $\sum_{i\in T}w_{i}=0$,
it is the case that $\sum_{i\in T}\phi(w_{i})=0$. \end{defn}

For graphic matroids, the matroid-homomorphism from $G$ to $H$ is
a map from the edges of $G$ to the edges of $H$ that ensures that
cycles are mapped to even degree subgraphs of $H$.

\begin{lem} \label{lem:homomorphism}If the canonical function $f_{\calm_{1}}$
contains an instance of $\calm_{2}$ somewhere, then $\calm_{2}$
has a homomorphism to $\calm_{1}$. \end{lem}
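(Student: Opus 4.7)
The plan is to exploit the product structure of the canonical function: $f_{\calm_1}(x,y)=1$ forces the first $k$-coordinate component $x$ to land in $\{v_1,\dots,v_k\}$. So when $f_{\calm_1}$ contains $\calm_2$ at some linear map $L:\cube^\ell\to\cube^n$, we can read off the desired homomorphism $\phi$ from the first component of $L$.

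Concretely, the first step is to decompose the linear map. Write $L=(L',L'')$ where $L':\cube^\ell\to\cube^k$ and $L'':\cube^\ell\to\cube^{n-k}$ are the compositions of $L$ with the natural projections; both are linear. Since $f_{\calm_1}$ contains $\calm_2$ at $L$ means $f_{\calm_1}(L(w_i))=1$ for every $i\in[\ell]$, the definition of $f_{\calm_1}$ forces $L'(w_i)\in\{v_1,\dots,v_k\}$ for each $i$. This gives a candidate map $\phi:\{w_1,\dots,w_\ell\}\to\{v_1,\dots,v_k\}$ defined by $\phi(w_i)=L'(w_i)$; this is well-defined as a set-map because $L'$ is a function of $w_i$.

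The second step is to verify the dependency-preserving condition. If $T\subseteq[\ell]$ satisfies $\sum_{i\in T}w_i=0$ in $\cube^\ell$, then applying the linear map $L'$ yields $\sum_{i\in T}L'(w_i)=L'(0)=0$, which is precisely $\sum_{i\in T}\phi(w_i)=0$ in $\cube^k$. Thus $\phi$ is a matroid homomorphism from $\calm_2$ to $\calm_1$ in the sense of the definition preceding the lemma.

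I do not anticipate any real obstacle: the statement is essentially a structural observation, and the main work (showing that the canonical $f_{\calm_1}$ actually detects homomorphisms) has been built into the definition of $f_{\calm_1}$. The only subtle point is making sure $\phi$ is well-defined on the set $\{w_1,\dots,w_\ell\}$ rather than on the index set $[\ell]$, which is automatic since $L'$ is a function of its argument; any coincidences among the $w_i$ as vectors are forwarded consistently to coincidences among their images.
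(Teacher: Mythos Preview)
Your proof is correct and essentially identical to the paper's: the paper defines $\phi=\pi\circ L$ where $\pi:\cube^n\to\cube^k$ is the projection onto the first $k$ coordinates, which is exactly your $L'$, and then verifies the same two points (image lands in $\{v_1,\dots,v_k\}$ by definition of $f_{\calm_1}$, and dependencies are preserved by linearity).
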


\begin{proof} Let $f=f_{\calm_{1}}$ contain $\calm_{2}$ at $L$.
So $L:\cube^{\ell}\to\cube^{n}$ is a linear map satisfying $f(L(w_{i}))=1$
for every $i\in[\ell]$. Now consider the projection map $\pi:\cube^{n}\to\cube^{k}$
which sends $\langle x,y\rangle$ to $x$ (where $x\in\cube^{k}$
and $y\in\cube^{n-k}$).

We claim that the map $\phi$ which sends $x$ to $\pi(L(x))$ gives
a homomorphism from $\calm_{2}$ to $\calm_{1}$. On the one hand
$\phi$ is linear and so if $\sum_{i\in T}w_{i}=0$, then we have
$\sum_{i\in T}\phi(w_{i})=\phi(\sum_{i\in T}w_{i})=\phi(0)=0$. On
the other hand, we also have that $\phi(w_{i})\in\{v_{1},\ldots,v_{k}\}$.
This is true since $f(L(w_{i}))=1$, which implies, by the definition
of the canonical function $f$ that $\pi(L(w_{i}))\in\{v_{1},\ldots,v_{k}\}$.
Thus $\phi$ satisfies the requirements of a homomorphism from $\calm_{2}$
to $\calm_{1}$. \end{proof}

The above lemma now motivates the search for matroids $\calm_{2}$
that are not homomorphic to $\calm_{1}$. Proving non-homomorphism
in general may be hard, but we give a couple of settings where we
can find simple proofs. Each addresses a different case of Theorem~\ref{thm:main2}.

\global\long\def\og{\mathop{\rm og}}
 For a matroid $\calm$, let its {\em odd girth}, denoted $\og(\calm)$,
be the size of the smallest dependent set of odd cardinality, i.e.
the size of the smallest odd set $T\subseteq[\ell]$ such that $\sum_{i\in T}w_{i}=0$.

\begin{lem}\label{lem:odd-girth} If $\calm_{2}$ has a homomorphism
to $\calm_{1}$, then $\og(\calm_{2})\geq\og(\calm_{1})$. \end{lem}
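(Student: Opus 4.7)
My plan is to take a minimum odd dependent set in $\calm_2$, push it through the homomorphism, and extract from the image an odd dependent set in $\calm_1$ of no larger size. The one subtlety is that the homomorphism $\phi$ need not be injective, so the image in $\calm_1$ should be treated as a multiset and reduced modulo $2$ (since we are working over $\cube=\mathbb{F}_2$).

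Concretely, let $\phi:\{w_1,\ldots,w_\ell\}\to\{v_1,\ldots,v_k\}$ be a matroid homomorphism from $\calm_2$ to $\calm_1$, and let $T\subseteq[\ell]$ realize $\og(\calm_2)$, so that $|T|$ is odd, $|T|=\og(\calm_2)$, and $\sum_{i\in T}w_i=0$. By the homomorphism property, $\sum_{i\in T}\phi(w_i)=0$ in $\cube^k$. I would then define, for each $j\in[k]$, the multiplicity $m_j=|\{i\in T:\phi(w_i)=v_j\}|$, and let $S=\{j\in[k]:m_j\text{ is odd}\}$. Since we work over $\mathbb{F}_2$, even multiplicities cancel and we get $\sum_{j\in S}v_j=\sum_{i\in T}\phi(w_i)=0$, so $S$ is a dependent set of $\calm_1$ (assuming the $v_j$ are nonzero, which is the natural assumption for matroid elements; repeated elements are impossible in a matroid so distinct $j$ genuinely correspond to distinct ground-set elements).

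Two easy parity/size checks finish the argument. For parity: $|S|\equiv\sum_{j\in[k]}m_j=|T|\pmod 2$, and $|T|$ is odd, so $|S|$ is odd and in particular nonzero. For size: each $j\in S$ contributes at least one distinct $i\in T$ with $\phi(w_i)=v_j$, so $|S|\le|T|$. Combining, $S$ is an odd dependent set in $\calm_1$ of cardinality at most $\og(\calm_2)$, giving $\og(\calm_1)\le|S|\le\og(\calm_2)$.

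There is not really a hard step here; the only thing to be careful about is the passage from multisets to sets over $\mathbb{F}_2$ and the observation that this operation preserves the parity of the total count. Once that bookkeeping is made explicit, the lemma follows in a few lines, and the proof goes through identically whether $\calm_1$ and $\calm_2$ are graphic or arbitrary binary matroids (so the lemma will be available in the generality used elsewhere in Section~\ref{sec:Infinitely-properties}).
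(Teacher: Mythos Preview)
Your proof is correct and is essentially identical to the paper's own argument: the paper also takes a minimum odd dependent set $T$ in $\calm_2$, defines $T'=\{j\in[k]:\#\{i\in T:\phi(w_i)=v_j\}\text{ is odd}\}$ (your $S$), and observes that $T'$ is odd, dependent in $\calm_1$, and has $|T'|\le|T|$. Your write-up is slightly more explicit about the parity and size bookkeeping, but the content is the same.
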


\begin{proof} Let $\phi$ be a homomorphism from $\calm_{2}$ to
$\calm_{1}$ and let $T\subseteq[\ell]$ denote the smallest odd dependent
set of $\calm_{2}$. Now let $T'\subseteq[k]$ be the set $T'=\{j\in[k]|\#\{i\in T|\phi(w_{i})=v_{j}\}{\rm {~is~odd}\}}$.
On the one hand, we have $T'$ has odd cardinality; and on the other,
we have $0=\sum_{i\in T}\phi(w_{i})=\sum_{j\in T'}v_{j}$. So $T'$
is an odd dependent set in $\calm_{1}$. The lemma follows since $|T|\geq|T'|$.
\end{proof}

For graphic matroids constructed from the odd cycle graph $C_{k}$,
we have that its odd girth is just $k$ and so the above lemmas combine
to give that $C_{k}$-freeness is distinguishable from $C_{k+2}$-freeness,
and this suffices to prove Part (1) of Theorem~\ref{thm:main2}.

However the odd girth criterion might suggest that $G$-freeness for
any graph containing a triangle might be equivalent. Below we rule
this possibility out.

\begin{lem}\label{lem:K_n} Let $\calm_{1}$ be the graphic matroid
of the complete graph $K_{a}$ on $a$ vertices, and let $\calm_{2}$
be the graphic matroid of $K_{b}$. Then, if $b\geq\binom{a}{2}+2$,
there is no homomorphism from $\calm_{2}$ to $\calm_{1}$. \end{lem}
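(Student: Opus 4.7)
The plan is a direct pigeonhole argument exploiting triangles in $K_b$. Suppose, for contradiction, that $\phi$ is a homomorphism from $\calm_2$ (the graphic matroid of $K_b$) to $\calm_1$ (the graphic matroid of $K_a$). The key observation is that every triangle in $K_b$ is a minimal dependent set of size three, so by the definition of homomorphism its image under $\phi$ must be a multiset of three edges in $K_a$ whose sum in $\cube^{\binom{a}{2}}$ is zero. Since edges of $K_a$ are nonzero vectors, the only way this can happen is if all three images are distinct and sum to zero, i.e., the image is genuinely a triangle in $K_a$.

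With this in hand, fix an arbitrary vertex $v$ of $K_b$ and consider the restriction map $\psi\colon V(K_b)\setminus\{v\}\to E(K_a)$ defined by $\psi(u)=\phi(vu)$. The plan is to show that $\psi$ is injective. For any two distinct neighbors $u,w$ of $v$ in $K_b$, the edges $vu,vw,uw$ form a triangle of $K_b$, so $\phi(vu)+\phi(vw)+\phi(uw)=0$. If $\psi(u)=\psi(w)$, then these first two terms cancel and $\phi(uw)=0$, contradicting the fact that $\phi$ takes edges to edges. Hence $\psi$ is injective, which forces $b-1\le \binom{a}{2}$. This contradicts the hypothesis $b\ge\binom{a}{2}+2$, completing the proof.

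I do not expect a substantial obstacle here: the only step needing care is the initial claim that a triangle must map to a triangle (as opposed to something degenerate or larger), but this is immediate from the fact that three nonzero vectors in $\cube^{\binom{a}{2}}$ summing to zero must be three distinct edges forming a cycle in $K_a$. Everything else is a clean pigeonhole on the star at $v$.
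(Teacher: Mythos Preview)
Your proof is correct and is essentially identical to the paper's argument: both fix a vertex of $K_b$, look at the star of $b-1$ edges, and use that a collision $\phi(vu)=\phi(vw)$ forces $\phi(uw)=0$ via the triangle relation, contradicting that $\phi$ lands in the (nonzero) ground set of $\calm_1$. Your preliminary remark that a triangle must map to a genuine triangle is true but unnecessary for the argument---you only use the weaker fact that the three images sum to zero.
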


\begin{proof} Assume otherwise and let $\phi$ be such a homomorphism.
Fix any vertex of $K_{b}$ and let $e_{1},\ldots,e_{b-1}$ denote
the $b-1$ edges incident to this vertex. By the pigeonhole principle,
(since $b-1>\binom{a}{2}$) there must exist a pair of incident edges
$e_{i}$ and $e_{j}$ such that $\phi(e_{i})=\phi(e_{j})$. But now
let $f$ denote the edge which forms a triangle with $e_{i}$ and
$e_{j}$. Since in $K_{b}$ we have $e_{i}+e_{j}+f=0$ (viewing these
elements as vectors over $\cube$), it must be that $\phi(f)=\phi(e_{i})+\phi(e_{j})=0$
which is not an element of the ground set of $\calm_{1}$. This yields
the desired contradiction. \end{proof}

We are now ready to prove Theorem~\ref{thm:main2}.

\begin{proof}[Proof of Theorem~\ref{thm:main2}] First note that $C_{k+2}$-free
functions are also $C_{k}$-free. Informally, suppose a function $f$
has a $k$ cycle at point $x_{1},\ldots,x_{k}$, i.e., $f(x_{i})=1$
at these points and $\sum_{i}x_{i}=0$. Then $f$ has a $k+2$ cycle
at the points $x_{1},x_{1},x_{1},x_{2},\ldots,x_{k}$. (This informal
argument can obviously be converted to a formal one once we specify
the graphic matroids corresponding to $C_{k}$ and $C_{k+2}$ formally.)

On the other hand, if we take $\calm_{1}$ to be the graphic matroid
corresponding to $C_{k}$ and $f$ to be the canonical function corresponding
to $\calm_{1}$, then by Claim~\ref{claim:e-far} it is $2^{-k}$-far
from $\calm_{1}$-free, and by Lemmas~\ref{lem:homomorphism} and
\ref{lem:odd-girth} it does not contain $\calm_{2}$, the graphic
matroid of $C_{k+2}$.

For the second part of the theorem, note that every property that
is $G$-free is also $H$-free if $G$ is a subgraph of $H$. Thus
$K_{k}$-free is contained in $K_{\ell}$ free if $k\leq\ell$. The
proper containment can now be shown as above, now using Claim~\ref{claim:e-far}
and Lemmas~\ref{lem:homomorphism} and \ref{lem:K_n}. \end{proof}

\section{Conclusions and future work\label{sec:Conclusions}}

We introduced an infinite family of properties of Boolean functions
and showed them to be testable. Unfortunately, we were only able to
analyze the tests when the matroid $\calm$ was graphic and the pattern
was monochromatic. This raises a plethora of new problems that we
describe below.

The first natural quest is to generalize the problem to the solution
to the case when the matroid is arbitrary over $\cube$, and further
to the case when the matroid is over other fields. We note that this
seems to pose significant technical hurdles and indeed even the simple
property of being free of the matroid $\{e_{1},e_{2},e_{3},e_{1}+e_{2},e_{2}+e_{3},e_{3}+e_{1},e_{1}+e_{2}+e_{3}\}$
(where $e_{1},e_{2},e_{3}$ are linearly independent vectors) remains
open.

Next, it would be nice to extend the results to the case where the
pattern $\Sigma$ is an arbitrary binary string, as opposed to being
monotone. We did manage to extend this in the special case where $\calm$
is a cyclic matroid, but in this case the extension is not very interesting.
We do feel that our proof techniques already capture some non-trivial
other cases, but are far from capturing all cases, even for graphic
matroids.

Extending the patterns further, there is no real reason to view the
range as a field element, so a major generalization would be to consider
matroids over arbitrary fields, and letting the range be some arbitrary
finite set $R$ where the forbidden pattern $\Sigma\in R^{k}$. (We
don't believe there should be any major technical barriers in this
step, once we are able to handle arbitrary 0/1 patterns $\Sigma$.)
Finally, all the above problems consider the case of a single forbidden
pattern (and its linear transformations).

These properties were specified by a matroid $\calm$ on $k$ elements
and a pattern $\Sigma\subseteq\{0,1\}^{k}$. However to capture the
full range of linear-invariant non-linear properties that allow one-sided
error local tests, we should also allow the conjunction of a constant
number of constraints. We believe this could lead to a characterization
of all linear-invariant non-linear properties that allow one-sided
error local tests.

In a different direction, we feel that it would also be nice to develop
richer techniques to show the distinguishability of syntactically
different properties. For instance, even for the graphic case we don't
have a good understanding of when two different graphs represent essentially
the same properties, and when they are very different.

\section*{Acknowledgments}

We are grateful to Kevin Matulef for suggesting this research direction.
We thank Tali Kaufman and Swastik Kopparty for helpful discussions.
We thank Asaf Shapira for drawing our attention to his preprint \cite{Shapira}.

\bibliographystyle{plain}
\bibliography{main}

\appendix
\section{Matroids\label{sec:def-matroid}}

For background on matroid theory, we refer the reader to~\cite{Wel76}.

\begin{defn} A matroid ${\cal {M}}$ is a finite set $S$ (called
\emph{ground set}) and a collection $\mathcal{F}$ of subsets of $S$
(called \emph{independent sets}) such that the following hold:
\begin{enumerate}
\item $\emptyset\in\mathcal{F}$. 
\item If $X\in\mathcal{F}$ and $Y\subseteq X$, then $Y\in\mathcal{F}$. 
\item If $X$ and $Y$ are both in $\mathcal{F}$ with $|X|=|Y|+1$, then
there exists an $x\in X\setminus Y$ such that $Y\cup x\in\mathcal{F}$. 
\end{enumerate}
\end{defn}

A matroid $\calm$ on a ground set $S=\{x_{1},\ldots,x_{k}\}$ is
said to be {\em linear} if there exists a field $\mathbb{F}$ and
vectors $v_{1},\ldots,v_{k}\in\mathbb{F}^{k}$ such that some subset
$\{x_{i}|i\in T\}$ indexed by $T\subseteq\{1,\ldots,k\}$ is independent
if and only if the corresponding vectors $\{v_{i}|i\in T\}$ are linearly
independent. A linear matroid is {\em binary} if $\mathbb{F}=\cube$.

\section{Non-graphic matroids of complexity $1$}

\label{app:nongraphic}

First, we make the following claim that follows immediately from the
definition of cographic matroids and the notion of complexity. \begin{claim}\label{clm:cog}
A cographic matroid $\calm^{*}(G)$ has complexity $1$ if and only
if, for every edge $e\in E(G)$, there is a partition of $E(G)\setminus\{e\}$
into two disjoint sets $A$ and $B$ such that both of the subgraphs
$(V(G),A)$ and $(V(G),B)$ are connected. \end{claim}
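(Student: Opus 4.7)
The plan is to prove the equivalence by translating span in $\calm^*(G)$ into a connectivity statement on $G$, via the standard matroid duality between cycles (circuits of $\calm(G)$) and bonds (circuits of $\calm^*(G)$). The key preliminary step is to establish that, for any $X \subseteq E(G) \setminus \{e\}$, the element $e$ is in the span of $X$ in $\calm^*(G)$ if and only if $X \cup \{e\}$ contains a bond of $G$ through $e$; equivalently, iff the two endpoints of $e$ lie in distinct connected components of $(V(G), E(G) \setminus (X \cup \{e\}))$. This follows from the fact that an element of a binary matroid lies in the span of a set iff that set together with the element contains a matroid circuit through the element, combined with the defining property that the circuits of $\calm^*(G)$ are the bonds (minimal edge cuts) of $G$.

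Given this preliminary, the $(\Leftarrow)$ direction is immediate. Suppose that for every edge $e$, there is a partition $A, B$ of $E(G) \setminus \{e\}$ with both $(V(G), A)$ and $(V(G), B)$ connected. Then $E(G) \setminus (A \cup \{e\}) = B$; since $(V(G), B)$ is connected, the endpoints of $e$ share a component there, and the preliminary gives that $e$ is not in the span of $A$. By symmetry, $e$ is not in the span of $B$, so the partition witnesses complexity one at the coordinate of $e$. Since this holds for every $e$, the matroid $\calm^*(G)$ has complexity $1$.

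The $(\Rightarrow)$ direction requires more work. Here the complexity-one hypothesis only guarantees, for each $e$, some partition $A_0, B_0$ of $E(G) \setminus \{e\}$ in which the endpoints of $e$ are co-connected in each of $(V(G), A_0)$ and $(V(G), B_0)$---which is a strictly weaker condition than global connectedness of both subgraphs. My plan is to upgrade this partition to a globally connected one by a local swap argument. While $(V(G), A_0)$ still has a vertex $w$ that is disconnected from the endpoints of $e$, I would pick an edge $f \in B_0$ incident to $w$ and move it from $B_0$ to $A_0$. The swap must be chosen so that the endpoints of $e$ remain co-connected in $(V(G), B_0 \setminus \{f\})$, and iterating eventually spans both sides.

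The hard part will be certifying that a suitable swap edge $f$ always exists: i.e., that some edge at $w$ can be removed from $B_0$ without severing the only endpoint-path in $B_0$. I expect the argument to invoke the complexity-one hypothesis applied at the edge $f$ itself, yielding an alternative partition that can be used to locally reroute the endpoint-path in $B_0$ before executing the swap. If the elementary swap argument proves too brittle, a cleaner alternative is to reformulate the complexity-one hypothesis as a cut-counting lower bound on $G \setminus e$ and invoke Nash-Williams' theorem on edge-disjoint spanning trees, extracting two edge-disjoint connected spanning subgraphs whose edge sets together with any extra edges yield the desired partition $A, B$.
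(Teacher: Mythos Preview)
Your preliminary translation is correct and is exactly what the paper has in mind: $e \notin \mathrm{span}_{\calm^*(G)}(A)$ if and only if the endpoints of $e$ lie in the same component of $(V(G),\, E(G)\setminus(A\cup\{e\})) = (V(G),B)$. The paper offers no argument beyond asserting that the claim ``follows immediately from the definition of cographic matroids and the notion of complexity,'' so your treatment of $(\Leftarrow)$ already exceeds what the paper supplies, and it is correct.

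The gap is in the $(\Rightarrow)$ direction. You are right that the immediate translation only yields endpoint-connectivity in each part rather than global connectivity, but the upgrade you propose cannot be carried out: the $(\Rightarrow)$ implication is \emph{false} as stated. Take $G = K_{3,3}$ (the very graph the paper cites as an application of the claim). For any edge $e=\{u,v\}$, the graph $K_{3,3}-e$ still contains two edge-disjoint $u$--$v$ paths, since $K_{3,3}$ is $3$-edge-connected; putting one path in $A$, the other in $B$, and distributing the remaining edges arbitrarily shows that $e$ lies in the span of neither part, so $\calm^*(K_{3,3})$ has complexity~$1$ at every coordinate. Yet $K_{3,3}-e$ has only $8$ edges, while two edge-disjoint connected spanning subgraphs on $6$ vertices require at least $10$; equivalently, the singleton vertex partition violates the Nash--Williams bound ($8 < 2\cdot 5$). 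Thus neither your swap argument nor the Nash--Williams route can succeed. Only the $(\Leftarrow)$ direction is valid, and that is all the paper actually uses to show that $\calm^*(K_5)$ has complexity~$1$; the subsequent remark that $\calm^*(K_{3,3})$ does not have complexity~$1$ is itself in error.
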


\begin{prop} There is a matroid with complexity one that is not graphic.
\end{prop}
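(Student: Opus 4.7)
The plan is to exhibit $\calm^*(K_5)$, the cographic matroid of the complete graph on five vertices, as a binary matroid of complexity $1$ that is not graphic. The nongraphicity of $\calm^*(K_5)$ is classical (see Welsh~\cite{Wel76}): together with $\calm^*(K_{3,3})$, it appears as a forbidden minor in the excluded-minor characterization of graphic matroids, and in particular is not isomorphic to the cycle matroid of any graph. This reduces the problem to verifying that $\calm^*(K_5)$ has complexity $1$.

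By Claim~\ref{clm:cog}, complexity $1$ for $\calm^*(K_5)$ is equivalent to the following purely combinatorial statement: for every edge $e \in E(K_5)$, the nine edges of $K_5 - e$ can be partitioned into two sets $A, B$ such that both $(V(K_5), A)$ and $(V(K_5), B)$ are connected spanning subgraphs. Since $K_5$ is edge-transitive, it suffices to verify this for a single edge. Taking $e = \{1,2\}$, I would exhibit the explicit partition $A = \{13, 34, 35, 25\}$ (a spanning tree on $\{1,2,3,4,5\}$) together with $B = \{14, 15, 23, 24, 45\}$, which contains the spanning path $1$--$5$--$4$--$2$--$3$. A direct inspection confirms that both $(V,A)$ and $(V,B)$ touch all five vertices and are connected, proving complexity $1$ via Claim~\ref{clm:cog}.

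The only step carrying genuine content is the classical nongraphicity of $\calm^*(K_5)$, which is cited rather than re-proved; the complexity-$1$ verification is an elementary check. If a more conceptual derivation of the partition is desired in place of the ad hoc construction, one may invoke Nash-Williams' edge-disjoint spanning trees theorem: the graph $K_5 - e$ comfortably satisfies the Nash-Williams partition inequality for $k = 2$, so it decomposes into two edge-disjoint spanning trees using eight of its nine edges, and the ninth edge can be assigned to either side. Either route completes the argument, so the only real obstacle, namely verifying a non-graphic example exists at all, is resolved by the classical choice of $\calm^*(K_5)$.
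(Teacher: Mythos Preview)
Your proof is correct and follows essentially the same approach as the paper: both exhibit $\calm^*(K_5)$, invoke Claim~\ref{clm:cog} with an explicit two-part connected edge partition of $K_5 - e$, and cite the classical excluded-minor result (Tutte/Welsh) for nongraphicity. The paper's partition is the pentagon's four remaining outer edges versus its five diagonals, while you give a spanning-tree/complement split and make the edge-transitivity reduction explicit; these are cosmetic differences.
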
 \begin{proof} Consider the cographic matroid of $K_{5}$.
Embed $K_{5}$ in the plane as a pentagon and all its diagonals. Fix
an outer edge $e$ and partition the remaining $9$ edges into two
sets. One is the $4$ outer edges and the other is the remaining $5$
diagonal edges. Clearly both outer-edge set and diagonal-edge set
make the five vertices connected. Therefore by Claim \ref{clm:cog},
the cographic matroid of $K_{5}$ is of complexity one. On the other
hand, by a theorem of Tutte \cite{Tutte}, a matroid cannot be graphic
if it contains $\calm^{*}(K_{5})$ as a minor, which $\calm^{*}(K_{5})$
clearly does. So, $\calm^{*}(K_{5})$ is an example of a non-graphic
matroid that has complexity $1$. \end{proof}

We remark that not all cographic matroids have complexity $1$. For
example, the cographic matroid of $K_{3,3}$ cannot have complexity
$1$ because if we remove an edge from $K_{3,3}$, there do not remain
enough edges to form two edge-disjoint connected graphs on $6$ vertices,
violating Claim \ref{clm:cog}.

\section{Testing non-monotone properties\label{sec:Non-monotone-properties}}

In this section we prove Theorem~\ref{thm:nonmontest}. (Readers
may find it useful to recall the background material in Section \ref{sub:Fourier-Analysis}.)
We show that for non-monotone properties, i.e., when $\Sigma\neq0^{k}$
or $1^{k}$, the property of $(\calm,\Sigma)$-free is testable when
the underlying graph is a cycle. However, as opposed to Section \ref{sec:Infinitely-properties},
the number of non-monotone properties associated with cycles is finite.
In fact we give a complete characterization of these non-monotone
properties in Appendix~\ref{sub:Characterization}.

\begin{proof}[Proof of Theorem~\ref{thm:nonmontest}] Suppose we have
oracle access to a function $f:\cube^{n}\rightarrow\cube$. Consider
the following $k$-query test $T$, which selects a linear map $L:\cube^{k}\rightarrow\cube^{n}$
uniformly at random from all such possible linear maps. $T$ has oracle
access to $f$ and queries $f$ at the points $L(v_{1}),\ldots,L(v_{k})$.
$T$ rejects iff all of these points are evaluated to $1$. If $f$
is $(\calm,\Sigma)$-free, $T$ never rejects and has completeness
$1$.

Now we analyze the soundness of $T$. Suppose that $f$ is $\eps$-far
from being $(\calm,\Sigma)$-free. We want to show that $T$ rejects
with probability at least $\tau(\eps)$, such that $\tau(\eps)>0$
whenever $\eps>0$.

Let $\frac{1}{2}<\eta<1$ be any constant, and $a(\eps)$ and $b(\eps)$
be functions of epsilons that satisfy the constraints $a(\eps)+b(\eps)<\eps$
and $1-\eta>b(\eps)$. We shall specify these two functions at the
end of the proof.

Now let $G$ denote $\cube^{n}$. We apply Lemma~\ref{lemma:Green's regularity}
to $f$ to obtain a subspace $H$ of $G$ of co-dimension at most
$W(a(\eps)^{-3})$. We define a reduced function $f^{R}:\cube^{n}\rightarrow\cube$
as follows. We assume that $\Sigma$ has at least two occurrences
of $1$. (Otherwise it has at least two occurrences of $0$, and in
the construction of $f^{R}$, we flip the roles of $1$ and $0$ when
$f_{g+H}$ is not uniform. The rest of the proof will proceed analogously,
and we leave its verification to the readers.)

For each $g\in G$, if $f$ restricted to the coset $g+H$ is $a(\eps)$-uniform,
then define\[
f_{g+H}^{R}=\begin{cases}
0 & \text{ if }\mu(f_{g+H})<b(\eps)\\
1 & \text{ if }\mu(f_{g+H})>1-b(\eps)\\
f_{g+H} & \mbox{ otherwise. }\end{cases}\]

Else, define \[
f_{g+H}^{R}=\begin{cases}
1 & \text{ if }\mu(f_{g+H})\geq\eta\\
0 & \mbox{ otherwise. }\end{cases}\]

Note that at most $a(\eps)+b(\eps)$ fraction of modification has
been made to $f$ to obtain $f^{R}$, so $f^{R}$ is $\eps$-close
to $f$. By assumption, $f$ is $\eps$-far from $(\calm,\Sigma)$-free,
so $f^{R}$ has a $(\calm,\Sigma)$ pattern at some linear map $L:\cube^{k}\rightarrow\cube^{n}$,
i.e., for each $i\in[k]$, $f^{R}(L(v_{i}))=\sigma_{i}$, where $\Sigma=\left\langle \sigma,\ldots,\sigma_{k}\right\rangle $,
and $\sigma_{i}\in\cube$. Now consider the cosets $L(v_{i})+H$.
By our choice of rounding, $f$ restricted to each $L(v_{i})+H$ is
dense in the symbol $\sigma_{i}$, i.e., $\mu_{\sigma_{i}}(f_{L(v_{i})+H})\geq b(\eps)$.
since $1-\eta\geq b(\eps)$. We want to show that there are many $(\calm,\Sigma)$
patterns spanning across these cosets. In particular, we restrict
our attention to the relative number of $(\calm,\Sigma)$-patterns
at linear maps of the form $L+\phi$ , where $\phi$ maps linearly
from $\cube^{k}$ to $H$. Notice that the probability the test $T$
rejects is at least \[
2^{-(k-1)W(a(\eps)^{-3})}\cdot\Pr_{\phi:\cube^{k}\rightarrow H}[\forall i\in[k],f_{L(v_{i})+H}(\phi(v_{i}))=\sigma_{i}].\]

It suffices to show that the probability \begin{equation}
\Pr_{\phi:\cube^{k}\rightarrow H}[\forall i\in[k],f_{L(v_{i})+H}(\phi(v_{i}))=\sigma_{i}]\label{eq:Test Rej Prob}\end{equation}
 is bounded below by some constant depending only on $\eps$. To this
end, we divide our analysis into two cases, based on whether there
is some $j\in[k]$ such that $f_{L(v_{j})+H}$ is $a(\eps)$-uniform
or not.

\emph{Case $1$}: There exists some $j\in[k]$ such that $f_{L(v_{j})+H}$
is $a(\eps)$-uniform.

For each $i\in[k]$, define $f_{i}:H\rightarrow\cube$ to be $f_{i}=f_{L(v_{i})+H}+\sigma_{i}+1$.
Note that by definition, $\Exp f_{i}\geq b(\eps)$. We begin by arithmetize
Equation \ref{eq:Test Rej Prob} as \[
\Exp_{\phi:\cube^{k}\rightarrow H}\left[\prod_{i\in[k]}f_{i}(\phi(v_{i}))\right].\]

Since $\calm$ is a cyclic matroid, it is not hard to show, by Fourier
expansion, that \[
\Exp_{\phi:\cube^{k}\rightarrow H}\left[\prod_{i\in[k]}f_{i}(\phi(v_{i}))\right]=\sum_{\alpha\in H}\prod_{i\in[k]}\thinspace\widehat{f_{i}}(\alpha).\]

Using the facts that $\Exp f_{i}\geq b(\eps)$, $f_{j}$ is $a(\eps)$-uniform,
there exist two distinct indices $i_{1},i_{2}\neq j\in[k]$ (since
$k\geq3$), Cauchy-Schwarz, and Parseval's Identity, respectively,
we have

\begin{eqnarray*}
\sum_{\alpha\in H}\prod_{i\in[k]}\thinspace\widehat{f_{i}}(\alpha) & \geq & b(\eps)^{k}-\sum_{\alpha\neq0\in H}\thinspace\prod_{i\in[k]}\thinspace\left|\widehat{f_{i}}(\alpha)\right|\\
 & \geq & b(\eps)^{k}-a(\eps)\sum_{\alpha\neq0\in H}\thinspace\prod_{i\in[k]\backslash\{j\}}\thinspace\left|\widehat{f_{i}}(\alpha)\right|\\
 & \geq & b(\eps)^{k}-a(\eps)\sum_{\alpha\neq0\in H}\thinspace\left|\widehat{f_{i_{1}}}(\alpha)\right|\left|\widehat{f_{i_{2}}}(\alpha)\right|,\\
 & \geq & b(\eps)^{k}-a(\eps)\left(\sum_{\alpha\neq0\in H}\widehat{f_{i_{1}}}(\alpha)^{2}\right)^{1/2}\left(\sum_{\alpha\neq0\in H}\widehat{f_{i_{2}}}(\alpha)^{2}\right)^{1/2}\\
 & \geq & b(\eps)^{k}-a(\eps).\end{eqnarray*}

To finish the analysis, we need to specify $a(\eps),b(\eps)$ such
that the constraints $a(\eps)+b(\eps)<\eps$ and $1-\eta>b(\eps)$
are satisfied. Let $b(\eps)=(1-\eta)\cdot\eps$ and $a(\eps)=\frac{1}{2}(1-\eta)^{k}\eps^{k}$,
we have that the rejection probability is at least $\tau(\eps)\geq2^{-(k-1)W(a(\eps)^{-3})}(1-\eta)^{k}\eps^{k}/2$.

\emph{Case} 2: No $j\in[k]$ exists such that $f_{L(v_{j})+H}$ is
$a(\eps)$-uniform.

Since $\calm$ is a cyclic matroid, it is not hard to see that Equation
\ref{eq:Test Rej Prob} is equal to \begin{equation}
\Pr_{x_{1},\ldots,x_{k}\in H;\sum_{i}x_{i}=0}\left[\forall i\in[k],f_{L(v_{i})}(x_{i})=\sigma_{i}\right].\label{eq:Test Rej Prob 2}\end{equation}

Since $\Sigma$ contains at least two occurrences of the symbol $1$,
we may assume without loss of generality that $\sigma_{k-1}=\sigma_{k}=1$.
Fix $x_{1},\ldots,x_{k-2}\in H$ such that $f_{L(v_{i})}(x_{i})=\sigma_{i}$.
Let $z=\sum_{i=1}^{k-2}x_{i}$. Since $\eta>\frac{1}{2}$, by union
bound we have

\begin{eqnarray*}
\Pr_{x\in H}[f_{L(v_{k-1})}(x)=f_{L(v_{k})}(x+z)=1] & = & 1-\Pr_{x\in H}[f_{L(v_{k-1})}(x)=0\mbox{ or }f_{L(v_{k})}(x+z)=0]\\
 & \geq & 1-2(1-\eta)\\
 & > & 0.\end{eqnarray*}

Since for each $i\in[k]$, $f_{L(v_{i})+H}$ is not $a(\eps)$-uniform,
by our choice of rounding, $\Pr_{x\in H}[f_{L(v_{i})}(x_{i})=\sigma_{i}]$
is at least $1-\eta$. By picking $x_{1},\ldots,x_{k-2}$ uniformly
at random from $H$, it is not hard to see that the rejection probability
of the test is at least \[
\tau(\eps)\geq2^{-(k-1)W(a(\eps)^{-3})}(1-\eta)^{k-2}(2\eta-1),\]
 where $a(\eps)=\frac{1}{2}(1-\eta)^{k}\eps^{k}$. \end{proof}

\section{Characterization of cycle free functions\label{sub:Characterization}}

In this section we consider the property of being $(\calm,\Sigma)$-free,
where $\calm$ is the matroid of the $k$-cycle. Syntactically these
appear to be infinitely many different properties. We show that there
are only finitely many distinct properties here when $\Sigma$ is
not equal to $0^{k}$ or $1^{k}$. (As noted in Section~\ref{sec:Infinitely-properties},
when $\Sigma=1^{k}$, we do get infinitely many distinct properties.)

We start with some terminology that describes the distinct families
we get.

\begin{defn}$\mbox{}$$\mbox{\ensuremath{\,}}$
\begin{itemize}
\item Let $\const$ denote the set of constant functions (i.e., the zero
function and the one function). 
\item Let $\mlin$ denote the set of all linear functions, including the
constant functions. (We note that throughout we think of the constant
functions as linear, affine etc.). Let $\blin$ denote the complementary
family, i.e., all functions whose complements are in $\mlin$. 
\item Let $\aff$ denote the set of all affine functions, i.e., the linear
functions and their complements. Let $\baff$ denote the complementary
family. 
\item We use the notation $\flin$ to denote the family of linear subspace
functions and the $0$ function, i.e., $\flin=\{0\}\cup\{f:\cube^{n}\to\cube|f^{-1}(1)$
is a linear subspace of $\cube^{n}\}$. $\bflin$ is the complementary
family. 
\item We use the notation $\faff$ to denote the family of affine subspace
functions and the $0$ function, i.e., $\faff=\{0\}\cup\{f:\cube^{n}\to\cube|f^{-1}(1)$
is an affine subspace of $\cube^{n}\}$. $\bfaff$ is the complementary
family. 
\end{itemize}
\end{defn}

It turns out that for every $k\geq3$ and every $\Sigma\ne0^{k},1^{k}$,
a $(C_{k},\Sigma)$-free family is one of the nine families $\const,\mlin,\blin,\aff,\baff,\flin,\bflin,\faff,\bfaff$.
To give further details, let $Z(\Sigma)$ denote the number of zeroes
in $\Sigma$ and $O(\Sigma)$ denote the number of ones in $\Sigma$.
We have:

\begin{theorem} \label{thm:characterization} For every $k\geq3$
and every $\Sigma\ne0^{k},1^{k}$, a $(C_{k},\Sigma)$-free family
is one of \[
\const,\mlin,\blin,\aff,\baff,\flin,\bflin,\faff,\bfaff.\]
 Specifically:
\begin{enumerate}
\item If $Z(\Sigma)$ and $O(\Sigma)$ are even, the $\calf_{C_{k},\Sigma}=\const$. 
\item If $O(\Sigma)>1$ is odd and $Z(\Sigma)$ is even, then $\calf_{C_{k},\Sigma}=\mlin$.
Complementarily, if $Z(\Sigma)>1$ is odd and $O(\Sigma)$ is even,
then $\calf_{C_{k},\Sigma}=\blin$. 
\item If $O(\Sigma)=1$ and $Z(\Sigma)$ is even, then $\calf_{C_{k},\Sigma}=\flin$.
Complementarily, if $Z(\Sigma)=1$ and $O(\Sigma)$ is even, then
$\calf_{C_{k},\Sigma}=\bflin$. 
\item If $O(\Sigma),Z(\Sigma)>1$ are odd, then $\calf_{C_{k},\Sigma}=\faff$. 
\item If $Z(\Sigma)=1$ and $O(\Sigma)>1$ is odd, then $\calf_{C_{k},\Sigma}=\faff$.
Complementarily, if $O(\Sigma)=1$ and $Z(\Sigma)>1$ is odd, then
$\calf_{C_{k},\Sigma}=\bfaff$. 
\end{enumerate}
\end{theorem}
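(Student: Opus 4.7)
The plan is to establish each of the nine characterizations in Theorem~\ref{thm:characterization} by proving two inclusions: (i) the claimed family is contained in $\calf_{C_{k},\Sigma}$, and (ii) every $f$ outside the claimed family contains an occurrence of $(C_{k},\Sigma)$. The starting point is a pair of observations about the cyclic matroid. Because the only dependence among the $k$ elements of $C_{k}$ is $v_{1}+\cdots+v_{k}=0$, a linear map $L:\cube^{k}\to\cube^{n}$ corresponds bijectively to a tuple $(x_{1},\dots,x_{k})$ with $\sum x_{i}=0$, and any permutation of such a tuple is also realized by some linear map. Hence $(C_{k},\Sigma)$-freeness depends on $\Sigma$ only through the multiset of its entries, i.e.\ through $(O(\Sigma),Z(\Sigma))$. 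The second observation is the parity identity: if $f(x)=\ell(x)+c$ is affine, then $\sum_{i}f(x_{i})=kc\pmod 2$ whenever $\sum x_{i}=0$. So linear $f$ always produces patterns of even weight, and affine non-linear $f$ produces patterns of weight $k\bmod 2$.

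For direction (i) in Cases 1, 2, and 4, I would just read off the parity identity: in each of these cases the claimed family consists of constants, linear, or affine functions, and the pattern weights they produce have a parity incompatible with $O(\Sigma)$. For Cases 3 and 5 (where $O(\Sigma)$ or $Z(\Sigma)$ equals $1$), a finer structural argument is needed. The key lemma is that if, say, $f^{-1}(0)$ is a linear subspace $V$, then any $(x_{1},\dots,x_{k})$ with $x_{2},\dots,x_{k}\in V$ and $\sum x_{i}=0$ forces $x_{1}=\sum_{i\ge 2}x_{i}\in V$, ruling out any pattern with exactly one entry equal to $1$; a parallel computation in a quotient handles the affine-subspace variant.

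For direction (ii) I would construct explicit occurrences of $(C_{k},\Sigma)$ using the following building blocks. If $f$ is non-constant, pick $a\in f^{-1}(1)$ and $b\in f^{-1}(0)$ and take $O(\Sigma)$ copies of $a$ together with $Z(\Sigma)$ copies of $b$; the parities in the hypothesis make $O(\Sigma)\,a+Z(\Sigma)\,b=0$. If $f$ is non-affine, there exist $a,b$ with $f(0)+f(a)+f(b)+f(a+b)\equiv 1\pmod 2$, and the quadruple $\{0,a,b,a+b\}$ sums to $0$ while providing an $f$-pattern of odd weight that can be extended to length $k$ by repeated-pair padding (which preserves sum and adjusts counts by an even amount). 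For the subspace cases, if $f^{-1}(1)$ is not a linear subspace then there exist $y_{1},y_{2}\in f^{-1}(1)$ with $y_{1}+y_{2}\in f^{-1}(0)$; this seed, padded with pairs of equal elements from $f^{-1}(0)$ (or with copies of $0$, using $f(0)$), yields the desired $(C_{k},\Sigma)$. In each subcase one must check simultaneously that the weight is $O(\Sigma)$ and that the total sum vanishes, which is a bookkeeping exercise in parities.

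The main obstacle will be Cases 3 and 5, where the characterization is structural rather than purely parity-based, so the inclusion (ii) is strictly stronger than ruling out affine functions. The hardest step is showing that if $A=f^{-1}(1)$ and $B=f^{-1}(0)$ with $A\cap(k-1)B=\varnothing$, then $B$ must be a linear subspace (and an affine-subspace analogue for Case 5). This reduces via the $(k-1)$-fold sumset structure to closure of $B$ under pairwise addition, but one has to handle edge cases where $|A|$ or $|B|$ is very small (so not enough distinct elements are available for the padding step), and where small $n$ restricts the available auxiliary points. These edge cases will likely be verified by direct ad hoc arguments.
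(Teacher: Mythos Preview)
Your plan is essentially the paper's: both rest on the padding observation that adjoining a repeated pair $(c,c)$ to a zero-sum $k$-tuple yields a zero-sum $(k{+}2)$-tuple while shifting $(O,Z)$ by $(2,0)$ or $(0,2)$. The paper runs this backward, as a containment $(C_{k+2},\Sigma\circ 00)\text{-free}\subseteq(C_k,\Sigma)\text{-free}$ (and its $\circ 11$ analogue), thereby reducing every case to a base pattern with $k\le 6$ that it then handles directly; you run it forward, padding a short seed out to length $k$. These are the same argument traversed in opposite directions, and your parity identity for affine $f$ is exactly what the paper uses for the easy inclusions.

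One bookkeeping point to watch: your quadruple seed $(0,a,b,a+b)$ for non-affine $f$ always has $Z_0$ odd, so in Case~2 (where $Z(\Sigma)$ is even) no amount of pair-padding can reach the target. The remedy is to use instead a triple $(a,b,a+b)$ witnessing non-\emph{linearity} (i.e.\ $f(a)+f(b)+f(a+b)=1$), which has $Z_0\in\{0,2\}$; the remaining complement-of-linear subcase is already handled by your first building block with $a=0$, since then $f(0)=1$ and $O(\Sigma)\cdot 0+Z(\Sigma)\cdot b=0$. The paper's reduction of Case~2 to $(C_3,001)$ and $(C_5,00111)$ amounts to exactly this triple. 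With that adjustment your outline goes through.
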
 We begin with some simple facts and observations. \begin{fact}[\cite{ParRonSam}]
Let $S$ be an affine subspace. Then $x,y$ and $z$ are all in $S$
implies $x+y+z$ is also in $S$. Conversely, if for any triple $x,y$
and $z$ in $S$ implying $x+y+z$ in $S$, then $S$ is an affine
subspace. \end{fact}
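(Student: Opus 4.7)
The plan is to prove both directions by a translation-to-origin argument, exploiting the fact that over $\cube=\mathbb{F}_2$ we have $3v=v$ for every $v$, so the three-term closure condition coincides, after translating by a basepoint, with ordinary closure under addition.

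For the forward direction, assume $S$ is a nonempty affine subspace, so by definition $S=s_0+V$ for some fixed $s_0\in S$ and some linear subspace $V\leq\cube^n$. Given $x,y,z\in S$, write $x=s_0+a$, $y=s_0+b$, $z=s_0+c$ with $a,b,c\in V$. Then
\[
x+y+z \;=\; 3s_0 + (a+b+c) \;=\; s_0 + (a+b+c),
\]
and since $V$ is closed under addition, $a+b+c\in V$, hence $x+y+z\in S$. (If $S$ is empty the implication is vacuous.)

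For the converse, assume $S$ is nonempty and closed under the triple-sum operation. Fix an arbitrary $s_0\in S$ and define $V:=s_0+S=\{s_0+s:s\in S\}$; the goal is to show $V$ is a linear subspace, which will give $S=s_0+V$ and establish that $S$ is affine. First, $0=s_0+s_0\in V$. Next, given $v_1,v_2\in V$, write $v_i=s_0+s_i$ with $s_i\in S$; applying the hypothesis to the triple $s_0,s_1,s_2\in S$ yields $s_0+s_1+s_2\in S$, so
\[
v_1+v_2 \;=\; s_1+s_2 \;=\; s_0+(s_0+s_1+s_2)\;\in\; V.
\]
Since over $\mathbb{F}_2$ containing $0$ and being closed under addition suffice for a set to be a linear subspace, $V$ is linear. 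I do not anticipate any real obstacle: the only slightly delicate point is the convention on whether the empty set is considered affine, but the statement is naturally read with $S$ nonempty, and the characteristic-$2$ identity $3s_0=s_0$ is precisely what makes the translated closure argument go through without any further work.
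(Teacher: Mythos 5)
Your proof is correct. The paper itself gives no argument for this fact --- it is simply cited from \cite{ParRonSam} --- so there is nothing to compare against; your translation-to-a-basepoint argument, using $3s_0=s_0$ and $2s_0=0$ over $\mathbb{F}_2$ so that the triple-sum closure becomes ordinary additive closure of $V=s_0+S$, is the standard self-contained proof and handles both directions cleanly.
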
 

This fact immediately gives the following observations.

\begin{obs}\label{Obs:Obs 0 char} A function $f:\{0,1\}^{n}\rightarrow\{0,1\}$
is $(C_{4},1110)$-free if and only if $f$ is in $\faff$. \end{obs}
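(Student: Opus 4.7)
The proof will be essentially a direct unwinding of definitions, using the cited Parnas--Ron--Samorodnitsky characterization of affine subspaces as sets closed under the ternary operation $(x,y,z)\mapsto x+y+z$.

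First I would expand what $(C_4,1110)$-freeness really says. Using the standard representation of the cycle matroid of $C_4$ as $v_1=e_1,\ v_2=e_2,\ v_3=e_3,\ v_4=e_1+e_2+e_3$, an arbitrary linear map $L\colon\cube^{4}\to\cube^{n}$ is determined by the free choice of $x=L(e_1),\, y=L(e_2),\, z=L(e_3)\in\cube^{n}$, and then $L(v_4)=x+y+z$. So $f$ contains $(C_4,1110)$ at some $L$ iff there exist $x,y,z\in\cube^{n}$ with $f(x)=f(y)=f(z)=1$ and $f(x+y+z)=0$. Equivalently, $f$ is $(C_4,1110)$-free iff the set $S:=f^{-1}(1)$ is closed under $(x,y,z)\mapsto x+y+z$.

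For the ``if'' direction, suppose $f\in\faff$. If $f\equiv 0$ then $S=\emptyset$ and the closure condition is vacuous. Otherwise $S$ is an affine subspace, and the forward part of the Parnas--Ron--Samorodnitsky fact gives exactly the closure $x,y,z\in S\Rightarrow x+y+z\in S$, which is the $(C_4,1110)$-freeness condition.

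For the ``only if'' direction, suppose $f$ is $(C_4,1110)$-free. If $S=\emptyset$, then $f$ is the zero function and lies in $\faff$ by definition. If $S\neq\emptyset$, then by the equivalent reformulation above $S$ is closed under $(x,y,z)\mapsto x+y+z$, and the converse part of the Parnas--Ron--Samorodnitsky fact concludes that $S$ is an affine subspace, hence $f\in\faff$.

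There is essentially no technical obstacle here: once one notes that the four vectors $v_1,v_2,v_3,v_1+v_2+v_3$ representing $C_4$ translate the forbidden pattern $1110$ into exactly the Parnas--Ron--Samorodnitsky triple-sum condition, both directions follow immediately from the cited fact. The only care needed is to handle the degenerate case $f\equiv 0$ (equivalently $S=\emptyset$), which is treated separately so that the statement matches the definition of $\faff$, and to verify that ranging $L$ over all linear maps $\cube^{4}\to\cube^{n}$ really lets $(x,y,z)$ range over all of $(\cube^{n})^{3}$ without restriction.
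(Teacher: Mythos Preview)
Your proof is correct and matches the paper's approach exactly: the paper does not write out a proof but simply states that the observation follows immediately from the Parnas--Ron--Samorodnitsky fact, and what you have written is precisely the unwinding of that ``immediately.'' The only minor imprecision is that a linear map $L:\cube^{4}\to\cube^{n}$ is not literally determined by $L(e_1),L(e_2),L(e_3)$ (there is also $L(e_4)$), but since none of the $v_i$ involve $e_4$ the tuple $(L(v_1),\ldots,L(v_4))$ depends only on those three images, so the argument goes through unchanged.
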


\begin{obs}\label{Obs:Obs 1 char} A function $f:\{0,1\}^{n}\rightarrow\{0,1\}$
is $(C_{3},100)$-free if and only if $f$ is the disjunction (OR)
of linear functions (or the all $1$ function). Consequently, $f$
is $(C_{3},110)$-free if and only if $f$ is in $\flin$. \end{obs}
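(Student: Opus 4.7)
The plan is to prove the $(C_3,100)$ characterization by directly unpacking the definition in terms of $f^{-1}(0)$, and then obtain the $(C_3,110)$ characterization by combining complementation of $f$ with the symmetry of $C_3$ under permutations of its ground set. Represent $C_3$ by $v_1=e_1, v_2=e_2, v_3=e_1+e_2\in\cube^3$. A linear map $L:\cube^3\to\cube^n$ acts on these through $(x,y)=(L(e_1),L(e_2))$ with $L(v_3)=x+y$, so ranging over all such $L$ is the same as ranging over all pairs $(x,y)\in\cube^n\times\cube^n$. Hence $(C_3,100)$-freeness of $f$ amounts to: no pair $x,y\in\cube^n$ satisfies $(f(x),f(y),f(x+y))=(1,0,0)$; contrapositively, $f(y)=f(x+y)=0$ forces $f(x)=0$.

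For the backward direction: if $f\equiv 1$, the forbidden pattern never occurs; if $f^{-1}(0)=V$ is a linear subspace, then $y,x+y\in V$ forces $x=y+(x+y)\in V$, so $f(x)=0$. For the forward direction: suppose $f$ is $(C_3,100)$-free and not identically $1$, so $f^{-1}(0)$ is nonempty. For any $y_1,y_2\in f^{-1}(0)$, apply the contrapositive with $y:=y_1$ and $x:=y_1+y_2$ (so $x+y=y_2$) to get $f(y_1+y_2)=0$; thus $f^{-1}(0)$ is closed under XOR. Since every element of $\cube^n$ is its own additive inverse, a nonempty XOR-closed set is automatically a linear subspace $V$. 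Writing $V=\bigcap_{i}\ker\ell_i$ for suitable linear forms $\ell_i$ expresses $f$ as $\ell_1\vee\cdots\vee\ell_m$, completing the first equivalence.

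For the $(C_3,110)$ statement, I invoke the paper's observation that the property specified by $v_1,\ldots,v_k$ is invariant under replacing each $v_i$ by $T(v_i)$ for an invertible $T:\cube^k\to\cube^k$. Any permutation $\pi$ of $\{v_1,v_2,v_3\}$ can be realized by such a $T$: act as $\pi$ on the $2$-plane spanned by $e_1,e_2$ (each permutation of the three nonzero vectors there is induced by an invertible linear map of that plane) and extend by $e_3\mapsto e_3$, which keeps $T$ invertible on $\cube^3$. Consequently $(C_3,\Sigma)$-freeness depends only on the multiset of bits of $\Sigma$. In particular $(C_3,110)$-free is equivalent to $(C_3,011)$-free, which is in turn equivalent to $(C_3,100)$-freeness of $1-f$ (complementing $f$ flips every bit of the forbidden pattern). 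The first part then says $1-f$ is identically $1$ (i.e.\ $f\equiv 0$) or $(1-f)^{-1}(0)=f^{-1}(1)$ is a linear subspace; in either case $f\in\flin$. There is no real obstacle in this argument; the only care needed is the bookkeeping of the two degenerate constant functions, since the empty set is not a linear subspace and so the all-$1$ case in the $(C_3,100)$ statement and the all-$0$ case absorbed into $\flin$ in the $(C_3,110)$ statement must be treated as separate branches of the case analysis.
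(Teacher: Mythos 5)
Your proof is correct and follows essentially the same route as the paper: show $f^{-1}(0)$ is closed under addition (hence a subspace) and apply De Morgan, then derive the $(C_3,110)$ case. In fact you are more careful than the paper, which leaves the ``consequently'' step unproved; your explicit reduction via the $GL_2(\mathbb{F}_2)$ symmetry of $\{e_1,e_2,e_1+e_2\}$ and complementation of $f$ is a clean way to fill that gap.
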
 

\begin{proof} Let $S=\{x\in\{0,1\}^{n}:f(x)=0\}$. If $S$ is empty,
then $f$ is the all $1$ function. Otherwise let $x$ and $y$ be
any two elements in $S$ (not necessarily distinct). Then if $f$
is $(C_{3},100)$-free, it must be the case that $x+y$ is also in
$S$. Thus $S$ is a linear subspace of $\{0,1\}^{n}$. Suppose the
dimension of $S$ is $k$ with $k\geq1$. Then there are $k$ linearly
independent vectors $a_{1},\ldots,a_{k}\in\cube^{n}$ such that $z\in S$
iff $(\langle z,a_{1}\rangle=0)\bigwedge\cdots\bigwedge(\langle z,a_{k}\rangle=0)$.
Therefore, by De Morgan's law, $f(z)=1$ iff $z\in\bar{S}$ iff $(\langle z,a_{1}\rangle=1)\bigvee\cdots\bigvee(\langle z,a_{k}\rangle=1)$,
which is equivalent to the claim. \end{proof}

\begin{obs}\label{Obs: Obs containment} If $\Sigma\neq1^{k}$ for
some $k>2$, then $\textrm{$(C_{k+2},\Sigma\circ00)$-free}\subseteq\textrm{$(C_{k},\Sigma)$-free}$.
Similarly, if $\Sigma\neq0^{k}$ for some $k>2$, then $\textrm{$(C_{k+2},\Sigma\circ11)$-free}\subseteq\textrm{$(C_{k},\Sigma)$-free}$.
\end{obs}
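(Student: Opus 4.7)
My plan is to prove the contrapositive: if $f$ contains $(C_k,\Sigma)$ at some linear map, then it also contains $(C_{k+2},\Sigma\circ 00)$ at some (possibly different) linear map. Since the two halves of the observation are symmetric under swapping the roles of $0$ and $1$, I will only write out the $00$-extension case and remark that the $11$-extension case uses the assumption $\Sigma\neq 0^k$ in the same manner.

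The first step is to translate the cycle-matroid patterns into an additive form that is easy to manipulate. Taking the natural representation of $C_k$ as $v_1=e_1,\dots,v_{k-1}=e_{k-1},v_k=e_1+\cdots+e_{k-1}$, a linear map $L:\cube^k\to\cube^n$ is equivalent to a tuple $(x_1,\dots,x_k)\in(\cube^n)^k$ with $\sum_{i=1}^k x_i=0$, via $x_i=L(v_i)$; and the analogous description holds for $C_{k+2}$. Thus $f$ contains $(C_k,\Sigma)$ precisely when there exist $x_1,\dots,x_k\in\cube^n$ with $\sum_i x_i=0$ and $f(x_i)=\sigma_i$ for all $i\in[k]$.

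Now assume $f$ contains $(C_k,\Sigma)$ via such a tuple $(x_1,\dots,x_k)$. Because $\Sigma\neq 1^k$, there is at least one coordinate $i^{\ast}\in[k]$ with $\sigma_{i^{\ast}}=0$. Define $y_j=x_j$ for $j=1,\dots,k$ and $y_{k+1}=y_{k+2}=x_{i^{\ast}}$. Then
\[
\sum_{j=1}^{k+2} y_j \;=\; \sum_{i=1}^{k} x_i \;+\; 2\,x_{i^{\ast}} \;=\; 0
\]
in $\cube^n$, and the value pattern is $\langle f(y_1),\dots,f(y_{k+2})\rangle=\langle\sigma_1,\dots,\sigma_k,0,0\rangle=\Sigma\circ 00$. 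Hence $f$ contains $(C_{k+2},\Sigma\circ 00)$ at the linear map determined by $(y_1,\dots,y_{k+2})$, completing the contrapositive. The $(C_{k+2},\Sigma\circ 11)$ statement is identical after choosing $i^{\ast}$ with $\sigma_{i^{\ast}}=1$, which exists whenever $\Sigma\neq 0^k$.

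There is essentially no obstacle here: the only point worth checking carefully is the matroid/linear-map bookkeeping in step one, namely that specifying the images $x_1,\dots,x_{k-1}$ freely and then forcing $x_k=x_1+\cdots+x_{k-1}$ really does yield a valid linear map $L$, so that arbitrary tuples summing to zero correspond to legitimate cycle instances. Once this translation is in hand, the extension by a repeated point is immediate and requires no combinatorial or algebraic work beyond the cancellation $2x_{i^{\ast}}=0$ in characteristic two.
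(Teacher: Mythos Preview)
Your proof is correct and follows essentially the same argument as the paper: both prove the contrapositive by taking a violating $k$-tuple $(x_1,\dots,x_k)$ with $\sum_i x_i=0$, selecting an index $i^\ast$ where $\sigma_{i^\ast}=0$ (guaranteed by $\Sigma\neq 1^k$), and appending two copies of $x_{i^\ast}$ to obtain a violating $(k+2)$-tuple for $\Sigma\circ 00$. Your version is a bit more explicit about the translation between linear maps $L$ and tuples summing to zero, but the idea is identical.
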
 

\begin{proof} By symmetry, we only need to prove the first part.
Let $f\in\textrm{$(C_{k+2},\Sigma\circ00)$-free}$. Suppose $f\notin\textrm{$(C_{k},\Sigma)$-free}$,
then there exists a violating tuple, say, $\langle x_{1},x_{2},\ldots,x_{j}\rangle$
such that $\sum_{i=1}^{j}x_{i}=0$ and \[
\langle f(x_{1}),f(x_{2}),\ldots,f(x_{j})\rangle=\Sigma.\]
 Since $\Sigma$ is not an all $1$ vector, there exists some $k$
such that $f(x_{k})=0$. But then $\langle x_{1},x_{2},\ldots,x_{j},x_{k},x_{k}\rangle$
would be a violation tuple of pattern $(C_{k+2},\Sigma\circ00)$,
contradicting our assumption that the function $f$ is in $\textrm{$(C_{k+2},\Sigma\circ00)$-free}$.
\end{proof}

\begin{obs}\label{Obs: Obs 0011 constant} $(C_{4},0011)$-free equals
the set of constant functions. \end{obs}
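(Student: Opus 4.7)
The plan is to prove both directions of the equivalence, which turns out to be straightforward once we observe that the cycle constraint on $C_4$ can be satisfied by a degenerate linear map.

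First, recall that for the cycle matroid $C_4$, the representation is $v_1 = e_1, v_2 = e_2, v_3 = e_3, v_4 = e_1 + e_2 + e_3$, so a linear map $L:\cube^4\to\cube^n$ is determined by arbitrary choices $L(v_1), L(v_2), L(v_3) \in \cube^n$, with $L(v_4) = L(v_1) + L(v_2) + L(v_3)$. Equivalently, the possible tuples $\langle x_1, x_2, x_3, x_4 \rangle = \langle L(v_1), L(v_2), L(v_3), L(v_4)\rangle$ are exactly the $4$-tuples satisfying $x_1 + x_2 + x_3 + x_4 = 0$.

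For the easy direction ($\Leftarrow$), if $f$ is constant (either $f \equiv 0$ or $f \equiv 1$), then $\langle f(L(v_1)), f(L(v_2)), f(L(v_3)), f(L(v_4)) \rangle$ is always $\langle 0,0,0,0 \rangle$ or $\langle 1,1,1,1\rangle$, and in particular never equals $\langle 0,0,1,1\rangle$. Hence $f$ is $(C_4, 0011)$-free.

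For the other direction ($\Rightarrow$), I would argue the contrapositive: if $f$ is not constant, then $f$ contains the pattern $(C_4, 0011)$ somewhere. Pick any $a, b \in \cube^n$ with $f(a) = 0$ and $f(b) = 1$ (which exist by non-constancy). Now define $L$ by $L(v_1) = a$, $L(v_2) = a$, $L(v_3) = b$; then $L(v_4) = a + a + b = b$, so $\langle f(L(v_1)), f(L(v_2)), f(L(v_3)), f(L(v_4))\rangle = \langle 0, 0, 1, 1 \rangle$, exhibiting the forbidden pattern. There is no real obstacle here; the only subtle point is noting that the definition of $(\calm,\Sigma)$-free quantifies over \emph{all} linear maps $L$, including non-injective ones, so using the same element $a$ twice (and $b$ twice) is permitted. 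This completes the proof.
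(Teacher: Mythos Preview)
Your proof is correct and follows essentially the same approach as the paper's own proof: both pick $a,b$ with $f(a)=0$, $f(b)=1$ and exhibit the degenerate tuple $(a,a,b,b)$ (which sums to zero) as a witness for the forbidden pattern. Your version is slightly more detailed in spelling out the representation of $C_4$ and in noting explicitly that non-injective linear maps are allowed, but the argument is the same.
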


\begin{proof} Clearly a constant function has no $0011$ pattern.
For the reverse inclusion, suppose $f$ is $(C_{4},0011)$-free but
not a constant function. Then there exist $x$ and $y$ such that
$f(x)=0$ $f(y)=1$. Then \[
\langle f(x),f(x),f(y),f(y)\rangle=0011.\]
 \end{proof}

\begin{proof}[Proof of Theorem \ref{thm:characterization}]$\mbox{}$
\begin{enumerate}
\item Follows from Observation~\ref{Obs: Obs containment} and Observation~\ref{Obs: Obs 0011 constant}. 
\item We only need to prove the first half of the claim, the second half
will then follow by symmetry. It is easy to check that, if $O(\Sigma)>1$
is odd and $Z(\Sigma)$ is even, then $\mlin\subseteq\calf_{C_{k},\Sigma}$.
To prove the other containment, note that by Observation~\ref{Obs:Obs 1 char}
and Observation~\ref{Obs: Obs containment}, $\calf_{C_{k},\Sigma}$
is contained in the disjunction of linear functions and in particular,
we may assume $f$ is $(C_{5},00111)$-free. So $f(x)=(\langle a_{1},x\rangle=1)\bigvee(\langle a_{2},x\rangle=1)\bigvee\cdots$,
where $a_{1},a_{2},\ldots,$ are non-zero, distinct and linearly independent
vectors. Since $a_{1}$ and $a_{2}$ are linearly independent, there
exist $x_{1},x_{2}$ such that $\langle a_{1},x_{1}\rangle=\langle a_{2},x_{2}\rangle=1$
while $\langle a_{1},x_{2}\rangle=\langle a_{2},x_{1}\rangle=0$.
Then $\langle f(0),f(0),f(x_{1}),f(x_{2}),f(x_{1}+x_{2})\rangle=00111$.
Therefore $f$ cannot be the disjunction of more than one linear function,
making it linear. Finally note that $\mlin\subseteq\textrm{$(C_{|\Sigma|},\Sigma)$-free}\subseteq\textrm{$(C_{5},00111)$-free}\subseteq\mlin$. 
\item This follows from Observation~\ref{Obs:Obs 1 char} and Observation~\ref{Obs: Obs containment}. 
\item Let $i$ and $j$ be odd integers. If $f(x)$ is linear, then it is
$\textrm{$(C_{i+j},0^{i}1^{j})$-free}$ since $j$ is odd. If $f(x)$
is the complement of linear, then it is $\textrm{$(C_{i+j},0^{i}1^{j})$-free}$
since $i$ is odd. So if $f$ is an affine function, then it is $\textrm{$(C_{i+j},0^{i}1^{j})$-free}$.
Now consider $\textrm{$(C_{6},000111)$-free}$. If $f$ is $\textrm{$(C_{6},000111)$-free}$
then the set $f^{-1}(1)$ forms an affine subspace (since $f$ is
also $\textrm{$(C_{4},0111)$-free}$.). Similarly the set $f^{-1}(0)$
forms an affine subspace (since $f$ is also $\textrm{$(C_{4},0001)$-free}$)
and so $f$ is an affine function. 
\item This follows from Observation~\ref{Obs:Obs 0 char} and Observation~\ref{Obs: Obs containment}. 
\end{enumerate}
\end{proof} 
\end{document}